\definecolor{env_back}{gray}{0.8}
\definecolor{thm_color}{rgb}{0,0,1}
\definecolor{conj_color}{rgb}{1,0,0}
\definecolor{dfn_color}{cmyk}{0,1,0,0}
\newtheorem{thm}{Theorem}[section]
\newtheorem{cor}[thm]{Corollary}
\newtheorem{lem}[thm]{Lemma}
\newtheorem{prop}[thm]{Proposition}
\newtheorem{ques}[thm]{Question}
\newtheorem{conj}[thm]{Conjecture}
\newtheorem*{clm*}{Claim}
\theoremstyle{definition}
\newtheorem{dfn}[thm]{Definition}
\newtheorem{exm1}[thm]{Example}
\theoremstyle{remark}
\newtheorem{rem}[thm]{Remark}
\newtheorem*{rem*}{Remark}
\newenvironment{lem*}[1]{\vspace{1ex}\noindent
{\bf Lemma* (#1).} [restatement]  \hspace{0.5em} \em }{ }
\newenvironment{thm*}[1]{\vspace{1ex}\noindent
{\bf Theorem* (#1).} [restatement]  \hspace{0.5em} \em }{ }
\newcommand{\set}[1]{\left\{#1\right\}}
\newcommand{\sr}[1]{\left(#1\right)}
\newcommand{\Integer}{\mathbb{Z}}
\newcommand{\Z}{\Integer}
\newcommand{\eps}{\varepsilon}
\newcommand{\ie}{{\em i.e.\ }}
\newcommand{\eg}{{\em e.g.\ }}
\DeclareMathOperator{\E}{\mathbb{E}}     % Without under-subscripts
\renewcommand{\Pr}{}
\let\Pr\relax
\DeclareMathOperator{\Pr}{\mathbb{P}}
\newcommand{\1}[1]{\mathbf{1}_{\set{ #1 } }}
\def\squareforqed{\hbox{\rlap{$\sqcap$}$\sqcup$}}
\def\qed{\ifmmode\squareforqed\else{\unskip\nobreak\hfil
\penalty50\hskip1em\null\nobreak\hfil\squareforqed
\parfillskip=0pt\finalhyphendemerits=0\endgraf}\fi}
\newcommand{\ignore}[1]{ }
\newcommand{\p}{\partial}
\newcommand{\dist}{\mathrm{dist}}
\newcommand{\vphi}{\varphi}
\newcommand{\define}[1]{{\bf #1}}
\newcommand{\rad}{\mathrm{rad}}
\newcommand{\capac}{\mathrm{cap}}
\newcommand{\isodim}{\mathrm{dim_{iso}}}
\newcommand{\arXiv}[1]{\href{http://arxiv.org/abs/#1}{\texttt{arXiv:#1}} }
\begin{document}

\title[Upper bounds on DLA]{Upper bounds on the growth rate of Diffusion Limited Aggregation}

\author{Itai Benjamini}
\address{IB: Weizmann Institute of Science}
\email{itai.benjamini@weizmann.ac.il}

\author{Ariel Yadin}
\address{AY: Ben-Gurion University of the Negev}
\email{yadina@bgu.ac.il}

\thanks{AY supported by the Israel Science Foundation (grant no.\ 1346/15).}

\begin{abstract}
We revisit Kesten's argument for the upper bound on the growth rate
of DLA.  We are able to make the argument robust enough so that
it applies to many graphs, where only control of
the heat kernel is required.  We apply this to many examples including
transitive graphs of polynomial growth, graphs of exponential growth,
non-amenable graphs, super-critical percolation on $\Z^d$, high dimensional
pre-Sierpinski carpets.
We also observe that a careful analysis shows that Kesten's original bound on
$\Z^3$ can be improved from $t^{2/3}$ to $\sqrt{t \log t}$.
\end{abstract}

\maketitle

\section{Introduction}

In this note we study DLA on general graphs, especially in the context of
some control on the decay of the {\em heat kernel} of the random walk.
DLA process defined by Witten \& Sander \cite{WittenSander} in 1981, 
is a notoriously resilient to rigourous analysis.
It is traditionally studied on the Euclidean graphs $\Z^d$.

In the DLA model, we aggregate particles on a graph as follows
(for a precise definition see Definition \ref{dfn:DLA} below).
Start with a particle sitting in the graph $G$.  Given the current aggregate
after $t$ particles, let a new particle move randomly on the graph
``coming in from infinity'', and let that particle stick to the aggregate when it first reaches the boundary
of the aggregate, thus forming an aggregate with one more particle.

Simulations show that DLA tends to have fractal structure, and $0$ density in the long run.
However, there are no rigorous mathematical arguments that we are aware of that prove this fact,
and we do not contribute something in this direction either.
For this reason variants of DLA have been studied, see \eg
\cite{BPP97, BYcylinder, BY16, Eldan, FP17}.

Perhaps the most notable (and almost the only) theorem regarding the standard version of
DLA is that of Kesten \cite{Kesten1, Kesten2, Kesten3},
which gives an upper bound on the growth rate of the DLA aggregate.
Kesten shows that in $\Z^d$ the DLA aggregate after $t$ particles reaches a distance of at most
$t^{2/d}$ when $d \geq 3$, and $t^{2/3}$ when $d=2$.

In this paper we ``clean up'' Kesten's argument, and make it more robust, so that basically only
good control on the decay of the heat kernel is required to apply it.
We are thus able to generalize Kesten's results to many more cases, including
transitive graphs of polynomial growth, non-amenable graphs, graphs of exponential growth, and
fractal graphs.

We also observe that properly applying Kesten's method improves the bound in the $\Z^3$ case
to an upper bound of $(t \log t)^{1/2}$ on the growth rate of the aggregate of $t$ particles.

In Section \ref{scn:examples} we provide many examples to which our methods apply.
After introducing some notation and properly defining DLA,
we collect the results in the paper in Theorem \ref{thm:main} below,
each with a reference to its corresponding specific argument
in Section \ref{scn:examples}.

\subsection{Notation}

Throughout we will always consider infinite, simple, connected graphs.
For a graph $G$ and some fixed vertex $o \in G$ we say that $G$ is rooted at $o$.
We write $|x| = \dist(x,o)$.  If $A \subset G$ is some finite set we write
$\rad(A) = \max \{ |x| \ : \ x \in A\}$.
The (outer) boundary of $A$ is defined to be $\p A = \{ x \not\in A \ : \ \exists \ y \in A \ , \ y \sim x \}$.

$(X_t)_t$ denotes the random walk on $G$, where
$\Pr_x, \E_x$ denote
the corresponding probability measure
and expectation conditioned on $X_0=x$.  When $x=o$ we may omit the subscript.
Define the stopping times
$$ T_A = \inf \set{ t \geq 0 \ : \ X_t \in A } \qquad T_A^+ = \inf \set{ t \geq 1 \ : \ X_t \in A } , $$
and $T_x = T_{\set{x}} , T_x^+ = T_{\set{x}}^+$.

We will always assume throughout that $G$ is transient; \ie $\Pr [ T_o^+ = \infty ] > 0$.
The \define{heat kernel} is defined to be
$$ p_t(x,y) = \Pr_x [ X_t = y ] . $$
The \define{Green function} is defined by
$$ g(x,y) = \sum_{t \geq 0} p_t(x,y) . $$
This is well defined and finite precisely when $G$ is transient.
It is well known that $\deg(x) g(x,y) = \deg(y) g(y,x)$,
and that $g$ is harmonic in the first variable, except for at $x=y$;
that is $\Delta g(\cdot , y) = \delta_y(\cdot)$
(here $\Delta = I-P$ is the Laplacian, $P$ is the transition matrix
of the random walk).

For a finite set $A \subset G$ we define the \define{equilibrium measure} on $A$ as
$$ e_A(x) := \1{ x \in A } \cdot \deg(x) \cdot \Pr_x [ T_A^+ = \infty ]  $$
and the \define{capacity} of $A$ as
$$ \capac(A) : = \sum_x  e_A(x) . $$

For a finite set $A \subset G$ we define the \define{harmonic measure} (from infinity)
of $A$ as the probability measure
$$ h_A(x) = \frac{ e_A(x) }{ \capac(A) } . $$
A well known estimate on capacity (see \eg \cite{ARS14}) is:
\begin{align}
\label{eqn:capac}
\inf_{x \in A} \sum_{y \in A} g(x,y) & \leq
\frac{ \sum_{x \in A} \deg(x)  }{ \capac(A) }
\leq \sup_{x \in A} \sum_{y \in A} g(x,y) .
\end{align}

\subsection{DLA}

{\em Diffusion Limited Aggregation}, or {\em DLA}, is a random process
introduced by Witten \& Sander
\cite{WittenSander} in 1981.  
The definition of DLA is as follows.

\begin{dfn} \label{dfn:DLA}
Let $G$ be a graph. DLA is a Markov chain on finite subsets of $G$, denoted $(A_t)_t$,
evolving according to the following law.

Start with some finite set $A_0 = A$.
Given $A_t$, the process at time $t$, let $a_{t+1}$ be a random vertex in $\p A_t$
chosen according to the harmonic measure of $A_t \cup \p A_t$.  That is,
$$ \Pr [ a_{t+1} = x \ | \ A_t ] : = h_{A_t \cup \p A_t} (x) . $$
Then, set $A_{t+1} = A_t \cup \{ a_{t+1} \}$.
\end{dfn}

It is immediate that $|A_t|=|A_0|+t$.
Thus, if the graph $G$ is $\Z^d$, then
$\rad(A_t) \geq c t^{1/d}$ for some constant $c>0$ and all $t$.
Simulations indicate that this is far from what actually occurs.
In fact, the aggregate $A_t$ seems to be fractal, with a non-trivial Hausdorff dimension.

\begin{conj}
Let $G= \Z^d$.  Let $(A_t)_t$ be DLA on $G$ started at $A_0 = \{0\}$.
Then, there exists $\tfrac1d < \alpha < 1$ such that
$$ 0 < \liminf_{t \to \infty} t^{-\alpha} \cdot \E \rad(A_t)
\leq \limsup_{t \to \infty} t^{-\alpha} \cdot \E \rad(A_t) < \infty . $$
\end{conj}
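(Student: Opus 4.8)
This is a famous open problem, and I do not expect to produce a complete proof; the plan below is meant mainly to isolate where the difficulty lies. The natural strategy is to prove the two inequalities separately --- a lower bound $\liminf_t t^{-\alpha}\,\E\rad(A_t)>0$ for some exponent $\alpha>\tfrac1d$, and a matching upper bound $\limsup_t t^{-\alpha}\,\E\rad(A_t)<\infty$ for the \emph{same} $\alpha$ --- and then reconcile them. For the lower bound the idea is to exploit the instability of Laplacian growth: the harmonic measure on a connected aggregate is biased toward its extremal vertices, so each arriving particle should preferentially extend the radius rather than fill the bulk. Concretely, one would want to show that, conditionally on $A_t$, the new vertex $a_{t+1}$ lands within $O(1)$ of a maximal-modulus vertex of $A_t$ with probability bounded below by a suitable negative power of the local density $t\,\rad(A_t)^{-d}$, and then run a supermartingale comparison for $\rad(A_t)$ against the volume constraint $|A_t|=t+1$. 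The analytic inputs would be the capacity estimate \eqref{eqn:capac}, monotonicity of capacity under inclusion, and Beurling-type lower bounds on the harmonic measure carried by the outermost part of a connected set that reaches distance $R$.

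For the upper bound the starting point is exactly Kesten's method as cleaned up in this paper: a walk coming in from infinity and attaching at distance $R$ from $o$ must cross every dyadic annulus around $o$, and in each annulus the portion of $A_t$ already present costs the walk a definite amount of Green's-function mass; summing these costs over the history of the process forces the number of particles needed to reach radius $R$ to be at least a power of $R$. To push the exponent from Kesten's $2/d$ down to the conjectured $\alpha$, one would need a quantitative lower bound on how much of each annulus the aggregate occupies along the route the particle actually takes --- which is itself a manifestation of the fractality one is trying to establish, so the two halves of the argument are intertwined rather than independent.

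The main obstacle --- and the reason this remains a conjecture --- is that \emph{both} halves require, as an a priori input, genuine control on the geometry of $A_t$ (a lower bound on tip harmonic measure for the lower bound; a lower bound on occupied density along annuli for the upper bound), and at present there is no mechanism to produce it. The dependence of harmonic measure on the underlying set is severely non-monotone and non-Lipschitz in any usable metric, so a purely local change to $A_t$ can alter the landing law globally; this is precisely what defeats the monotonicity and coupling arguments that succeed for tamer relatives of the model such as internal DLA, the Eden model, or the Hastings--Levitov family. A real proof would presumably have to take the form of a self-consistent renormalization: assume the scaling relation $\rad(A_t)\asymp t^{\alpha}$ holds down to some scale $s$, use it to analyze one further dyadic scale of growth, and recover the same relation, thereby closing the loop. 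Making such a bootstrap rigorous --- in particular, ruling out that the aggregate alternates between dense and dendritic regimes at different scales, which would already break any single-exponent statement --- is the crux, and I see no way to carry it out with current techniques; even establishing the mere existence of the exponent (the $\liminf$ and the $\limsup$ being governed by a common power) appears out of reach.
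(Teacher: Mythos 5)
The statement you are addressing is a \emph{conjecture} in the paper, not a theorem: the authors explicitly note that it has been open for almost forty years, that there has been essentially no progress on any lower bound with exponent $\alpha>\tfrac1d$, and that the only rigorous results in this direction are Kesten-type \emph{upper} bounds (of order $t^{2/d}$, or $\sqrt{t\log t}$ for $d=3$), which the rest of the paper generalizes. So there is no proof in the paper to compare yours against, and your proposal --- as you yourself state --- is not a proof either. It is a survey of strategy: a tip-biased harmonic measure argument for the lower bound, Kesten's annulus/capacity argument for the upper bound, and a renormalization bootstrap to make the two exponents coincide. None of these steps is carried out, and the obstruction you identify (both halves require a priori geometric control on $A_t$ --- tip harmonic measure from below, annulus occupation density from below --- that no current technique supplies, because harmonic measure responds non-monotonically and non-locally to changes in the aggregate) is exactly why the problem is open. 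Your assessment is consistent with the paper's framing and with the literature it cites.

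To be concrete about where the proposal falls short of a proof: the only rigorous lower bound available is the trivial $\rad(A_t)\geq c\,t^{1/d}$ coming from $|A_t|=t+1$ and the volume growth of $\Z^d$, and the paper records (as a separate conjecture) that even showing $t^{-1/d}\,\E\rad(A_t)\to\infty$ is open; your supermartingale comparison would need a quantitative lower bound on the harmonic measure near extremal vertices of an \emph{arbitrary} connected set of given volume and radius, and no such bound better than the Beurling-type upper estimates used in the paper is known. Likewise, sharpening Kesten's upper bound below $2/d$ requires lower bounds on how densely the aggregate fills annuli, which is the very fractality the conjecture asserts --- so the bootstrap you describe is circular unless one can rule out scale-dependent alternation between dense and dendritic growth, and no mechanism for that is known. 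In short: there is no error of reasoning to flag, but there is also no proof here, which is the correct state of affairs for this statement.
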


This conjecture has been open for almost 40 years, and basically there has been no advancement
towards any lower bound for such an $\alpha > \tfrac1d$.
As for the upper bound, this is one of the only known mathematical result, due to Kesten
\cite{Kesten ????}, which states that for DLA on $\Z^d$ we have that a.s.\
$$ \limsup_{t \to \infty} t^{-\alpha} \cdot \rad(A_t) < \infty , $$
with
$\alpha = \tfrac23$ for $d=2$ and $\alpha = \tfrac2d$ for $d \geq 3$.

Even the following naive conjecture seems wide open.

\begin{conj}
Let $G= \Z^d$.  Let $(A_t)_t$ be DLA on $G$ started at $A_0 = \{0\}$.
Then,
$$ \limsup_{t \to \infty} t^{-1/d} \E \rad(A_t) = 0 . $$
\end{conj}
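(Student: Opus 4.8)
The plan is to track the radius one particle at a time and sum. Since $|A_t| = t+1$, the aggregate gains a single vertex per step, and because any $a_{t+1} \in \p A_t$ is a neighbour of $A_t$ we have $0 \le \rad(A_{t+1}) - \rad(A_t) \le 1$, with the increment equal to $1$ exactly when the new vertex lands on the outer shell. Hence the increment is the indicator $\1{|a_{t+1}| > \rad(A_t)}$, and writing $A := A_t \cup \p A_t$ its conditional expectation is the frontier harmonic mass
\[
q_t \;:=\; \Pr\bigl[\,|a_{t+1}| > \rad(A_t)\ \big|\ A_t\,\bigr]
\;=\; h_{A}\bigl(\set{ x \in \p A_t \ : \ |x| > \rad(A_t) }\bigr).
\]
Telescoping gives the exact identity $\E\rad(A_t) = \rad(A_0) + \sum_{s<t} \E\, q_s$, so the whole problem reduces to showing that the incoming particle reaches the frontier rarely on average: it suffices to prove $\sum_{s<t} \E\, q_s = o(t^{1/d})$, i.e.\ $\E\, q_s = o\bigl(s^{\,1/d - 1}\bigr)$.

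To estimate $q_t$ I would expand $h_A = e_A/\capac(A)$ with $e_A(x) = \deg(x)\,\Pr_x[T_A^+ = \infty]$. For the numerator $\sum_{x:\,|x|>\rad(A_t)} e_A(x)$ I would bound the escape probabilities $\Pr_x[T_A^+=\infty]$ from the extremal vertices through the Green function and the heat-kernel decay that is the working hypothesis of this paper. For the denominator I would apply the capacity lower bound \eqref{eqn:capac} together with $|A_t| = t+1$ and the transient decay $g(x,y) \asymp |x-y|^{2-d}$ (for $d \ge 3$), which yields $\sup_x \sum_{y \in A_t} g(x,y) \lesssim t^{2/d}$ and hence $\capac(A_t) \gtrsim t^{(d-2)/d}$. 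Substituting these two bounds into $q_t = e_A(\text{frontier})/\capac(A_t)$ produces a quantitative ceiling on the per-step probability of pushing the radius outward, which one would then sum over $s < t$.

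The hard part is that these global, worst-case comparisons are far too lossy to reach the little-$o$ threshold, because the harmonic measure can concentrate on a handful of extremal vertices whenever $A_t$ grows long, thin protrusions; it is exactly this concentration on the frontier that must be suppressed on average. The decisive step is therefore to control $e_{A_t}$ on the outer shell \emph{uniformly over the random, highly irregular geometry} of the dynamically generated aggregate, rather than through comparison with a ball or a fixed worst-case configuration. I expect this to demand a multiscale decomposition of the frontier, coupled to the heat-kernel estimates developed here, that forces $\E\, q_s$ to decay fast enough to sum to $o(t^{1/d})$. Making such a decomposition rigorous for the process-generated set $A_t$—where the conditioning on $A_t$ interacts with the escape probabilities in a way that resists the static Green-function machinery—is the crux, and is precisely why even this naive form of the conjecture has remained open.
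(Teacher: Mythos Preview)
There is nothing to compare against: the statement is presented in the paper as an open \emph{conjecture}, not as a theorem. The sentence immediately preceding it reads ``Even the following naive conjecture seems wide open,'' and the paper offers no proof, no sketch, and no claimed resolution. Your proposal is therefore not being measured against any argument in the paper.

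As for the content of your outline: the telescoping reduction $\E\rad(A_t)=\sum_{s<t}\E q_s$ is correct, and the capacity lower bound $\capac(A_t)\gtrsim t^{(d-2)/d}$ is exactly the volume Beurling estimate the paper establishes for transitive polynomial-growth graphs. But, as you yourself diagnose in the final paragraph, bounding the numerator $e_A(\text{frontier})$ uniformly over the random geometry of $A_t$ is the entire problem, and nothing in the paper's toolkit (heat-kernel decay, Green-function bounds, \eqref{eqn:capac}) gives any leverage there beyond the trivial $e_A(x)\le\deg(x)$. That trivial bound, combined with the capacity estimate, recovers only $q_t\lesssim t^{-1+2/d}$, which sums to $O(t^{2/d})$ --- this is Kesten's bound, not $o(t^{1/d})$. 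Your ``multiscale decomposition of the frontier'' is a hope, not a method; you have correctly located the obstruction but not removed it. The honest status of your write-up is that it explains \emph{why} the conjecture is open, which matches the paper's own assessment.
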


\subsection{Main results}

We now collect the bounds we provide in this paper.
Next to each result we provide a reference to the corresponding argument in Section \ref{scn:examples}.

\begin{thm} \label{thm:main}
Let $G$ be a bounded degree transient graph rooted at $o$.
Let $(A_t)_t$ be DLA on $G$ started at $A_0=\{o\}$.
Under the following conditions, we have that a.s.\
$ \limsup_{t \to \infty}  \tfrac{ \rad(A_t) }{ f(t) } < \infty , $
where:

\vspace{20pt} \noindent
\begin{tabular}{ | p{6cm} | c | c  |}
\hline
graph condition & $f(t)$ & reference \\
\hline \hline
transitive of polynomial growth (degree $d \geq 4$) &
$t^{2/d}$ &
Theorem \ref{thm:poly-growth} \\
\hline
$\Z^3$ or any transitive of cubic growth &
$\sqrt{t \log t}$ &
Theorem \ref{thm:poly-growth} \\
\hline
transitive and exponential growth &
$(\log t)^4$ &
Theorem \ref{thm:exp growth} \\
\hline
pinched exponential growth &
$(\log t)^4$ &
Theorem \ref{thm:exp growth} \\
\hline
non-amenable &
$\log t$ &
Theorem \ref{thm:non amen DLA}
\\ \hline
transitive and has super polynomial growth &
$t^\eps$ for any $\eps>0$ &
Theorem \ref{thm:super-poly growth} \\
\hline
super-critical percolation on $\Z^d , d \geq 3$ &
as in $\Z^d$ &
Theorem \ref{thm:super-crit perc} \\
\hline
$3$-dimensional pre-Sierpinski carpet &
$t^\beta$ for $\beta = \tfrac{ \log_2 (13)  - 2}{3} = 0.5568\ldots$ &
Theorem \ref{thm:Sierpinski} \\
\hline
$4$-dimensional pre-Sierpinski carpet &
$t^{1/2}$ &
Theorem \ref{thm:Sierpinski} \\
\hline
$n$-dimensional pre-Sierpinski carpet, $n \geq 5$ &
$t^\beta \log t$ for
$\beta = \tfrac{ \log (3^n-1) - \log (3^{n-1}-1) }{ \log (3^{n-1}-1)   - \log \sqrt{ 3^n-1}  }$ &
Theorem \ref{thm:Sierpinski} \\
\hline
\end{tabular}
\end{thm}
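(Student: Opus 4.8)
The statement is a list of special cases, each proved by the result cited in its row and carried out in Section~\ref{scn:examples}; here I only describe the common engine, a robust form of Kesten's argument in which the sole graph-dependent input is a decay estimate for the heat kernel.

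The plan is to control, instead of $\rad(A_t)$ directly, the hitting times $\tau_r = \inf\{t : \rad(A_t) \ge r\}$. Since $\rad(A_{t+1}) - \rad(A_t) \in \{0,1\}$, an a.s.\ lower bound $\tau_r \ge \varphi(r)$ for all large $r$ is equivalent to $\limsup_t \rad(A_t)/\varphi^{-1}(t) < \infty$, so we will take $f = \varphi^{-1}$. To bound $\tau_r$ from below we track the aggregate scale by scale. During the epoch $[\tau_r,\tau_{2r})$ the aggregate must grow a connected arm joining the sphere of radius $r$ to the sphere of radius $2r$, so at least $\asymp r$ vertices get deposited at distance $\ge r$ from $o$ during this epoch; hence the expected number of particles attaching at distance $\ge r$ in the epoch is $\gtrsim r$. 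On the other hand, at each step the conditional probability that the new particle attaches at distance $\ge r$ is exactly the harmonic measure under $h_{A_t\cup\p A_t}$ of the far part of the boundary, so the argument reduces to an \emph{upper} bound on this harmonic measure, uniform over the epoch. The sharper version, which is what one actually runs, is that the radius can be forced to increase essentially one vertex at a time, each such ``successful'' step having (by the delicate estimate discussed below) conditional probability at most a constant times $1/\capac(A_t)$; summing the corresponding geometric waiting times over the epoch gives $\tau_{2r}-\tau_r \gtrsim r\cdot\capac(A_{\tau_r})$.

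This is where heat kernel control enters. By~\eqref{eqn:capac}, a lower bound on $\capac(S)$ amounts to an upper bound on $\sup_{x\in S}\sum_{y\in S} g(x,y)$, i.e.\ on partial sums of the Green function, and since $g(x,y)=\sum_t p_t(x,y)$ these are controlled by whatever decay is known for $p_t(x,y)$. Two capacity lower bounds on the transient aggregate are relevant: that of a geodesic segment of length $\asymp r$ inside the current arm (which, when $g(x,y)\lesssim \dist(x,y)^{2-d}$, is $\gtrsim r$ for $d\ge4$ and $\gtrsim r/\log r$ for $d=3$), and that of all of $A_t$, which has $|A_t|=t+1$ vertices and, under a $d$-dimensional isoperimetric inequality, has capacity $\gtrsim t^{(d-2)/d}$ (and $\gtrsim t$ on a non-amenable graph). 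Feeding these into the scheme, after a short bootstrap --- a preliminary bound $\tau_r\gtrsim r^a$ improves the capacity bound along the epoch, which improves the exponent, converging to the fixed point $a=d/2$ --- gives $\tau_r\gtrsim r^{d/2}$ on transitive graphs of polynomial growth of degree $d\ge4$, hence $f(t)=t^{2/d}$; the critical case $d=3$ loses a logarithm, giving $\tau_r\gtrsim r^2/\log r$ and $f(t)=\sqrt{t\log t}$, which recovers and sharpens Kesten's bound on $\Z^3$. On non-amenable graphs $\capac(A_t)\gtrsim t$, so each unit of radius costs $\gtrsim t$ particles, $\tau_{r+1}\gtrsim 2\tau_r$, and $\tau_r$ grows geometrically, giving $f(t)=\log t$; exponential growth yields a stretched-exponential heat kernel and super-polynomial (but slow) growth of $\tau_r$, giving $f(t)=(\log t)^4$; for the pre-Sierpinski carpets one uses the sub-Gaussian heat kernel estimates, whose exponents produce the stated $\beta$'s; and for supercritical percolation on $\Z^d$ one uses Barlow's quenched Gaussian heat kernel bounds on the infinite cluster, which are comparable to those of $\Z^d$ on all but a negligible set of scales. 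Finally, to upgrade each in-expectation statement to the a.s.\ statement one runs the epoch estimate along a geometric sequence of radii and applies Borel--Cantelli, using that the lengths $\tau_{2r}-\tau_r$ concentrate around their means.

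The main obstacle is the harmonic-measure bound of the second paragraph: a priori the far part of $\p A_t$ can contain many vertices, so one cannot simply bound its harmonic measure by (number of far boundary vertices)$\,\times\,$(maximal single-site harmonic measure $\le \deg/\capac(A_t)$) without losing exactly the factor one is trying to save. Kesten's device for organizing the depositions so that the arm lengthens essentially one vertex at a time, while honestly accounting for the equilibrium charge carried by the outer frontier, is the delicate core of the argument, and isolating precisely which heat-kernel input it needs --- rather than relying on special features of $\Z^d$ such as Beurling-type estimates --- is the point of the paper. A secondary difficulty, present in the percolation and pre-Sierpinski carpet rows, is that the heat kernel there is well behaved only on most scales or on a set of full measure, so one must either restrict to the event that the local geometry is regular or absorb the fluctuations into the constants; this is the extra, model-specific work carried out in the corresponding theorems of Section~\ref{scn:examples}.
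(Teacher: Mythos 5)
You correctly identify the overall architecture (heat kernel decay $\Rightarrow$ Green function bounds $\Rightarrow$ capacity lower bounds $\Rightarrow$ harmonic-measure (Beurling) estimates $\Rightarrow$ epoch/stopping-time analysis of $\tau_r$ $\Rightarrow$ Borel--Cantelli), and your heuristic bookkeeping reproduces every exponent in the table. But the central probabilistic step is missing, and you say so yourself: you assert that ``the radius can be forced to increase essentially one vertex at a time, each such successful step having conditional probability at most a constant times $1/\capac(A_t)$,'' and then in your last paragraph concede that exactly this reduction --- controlling the harmonic measure of the far part of $\p A_t$, which may contain many vertices, by a single-site bound --- is ``the delicate core'' that you have not carried out. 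As stated, the reduction is false: the event that $\rad$ increases at a given step is the harmonic measure of the whole outer frontier at maximal distance, which is not bounded by $D/\capac(A_t)$; and the fallback first-moment-plus-concentration argument for $\tau_{2r}-\tau_r$ is likewise unjustified (the increments are neither independent nor individually bounded in the way you need). So the proposal names the key obstacle but does not resolve it, which is a genuine gap rather than a different proof.

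The paper's resolution (Lemmas \ref{lem:kesten} and \ref{lem:kesten radius}, with the elementary Chernoff bound of Lemma \ref{lem:LD}) is a path-counting device, not an accounting of the equilibrium charge on the frontier: if $\rad(A_{s+t})\geq \rad(A_s)+n$, then some fixed simple path $v_0,v_1,\dots,v_n$ with $v_0\in A_s$ must be \emph{filled in order}. For a fixed path, at each step the conditional probability that the specific next-needed vertex $v(A_{j-1})$ is the one added is a single-site harmonic measure, hence at most $\vphi(|A_{j-1}|)$ (or $\vphi(\rad(A_{j-1}))$); the indicators are stochastically dominated by independent Bernoullis, and Lemma \ref{lem:LD} gives a probability at most $\exp\big(n\log(e I_\vphi/n)\big)$ for that one path. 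The union bound over the at most $|A_s|D^n$ candidate paths is then absorbed by choosing $n$ (and $t$) so that $eD I_\vphi/n$ is small, which directly yields the superexponentially small probabilities fed into Borel--Cantelli --- no concentration of $\tau_{2r}-\tau_r$ and no bootstrap/fixed-point iteration is needed (the capacity input is phrased in terms of $\rad(A_s)$ or $|A_s|=s$, so the exponents come out in one pass via Theorems \ref{thm:kesten bound volume} and \ref{thm:kesten bound radius}). Also note that for transitive polynomial growth with $d\geq 4$ the paper gets $t^{2/d}$ from a \emph{volume} Beurling estimate $\vphi(s)=Cs^{-1+2/d}$ (via a rearrangement bound on $\sum_{y\in A}g(x,y)$), not from the whole-aggregate capacity bound you invoke; the radius estimate alone would only give $t^{1/2}$ there.
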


\newpage

\section{Kesten's method}

\subsection{Beurling estimates}

A key step in Kesten's method is proving Beurling-type estimates for general sets.

\begin{dfn}
Let $G$ be a graph rooted at $o$.
Let $\vphi : [0,\infty) \to [0,1]$ be a non-increasing function with $\lim_{x \to \infty} \vphi(x) = 0$.

We say that $(G,o)$ has a \define{$\vphi$-volume Beurling estimate}
if for any connected finite set $o \in A \subset G$
we have that
$$ \sup_{x \in A} h_A(x) \leq \vphi(|A|) . $$

We say that $(G,o)$ has a \define{$\vphi$-radius Beurling estimate}
if for any connected finite set $o \in A \subset G$
we have that for any $x$
$$ \sup_{x \in A} h_A(x) \leq \vphi(\rad(A)) . $$
\end{dfn}

\begin{lem} \label{lem:kesten}
Let $G$ be a graph rooted at $o$ of bounded degree $\sup_x \deg(x) \leq D$.
Let $(A_t)_t$ be DLA on $G$ started at $A_0 = \set{o}$.
Suppose that $G$ has a $\vphi$-volume Beurling estimate.
Then, for any $t >0, s \geq n>0$ we have
\begin{align*}
\Pr [ \rad(A_{s+t})  \geq \rad(A_s) + n \ | \ A_s ]
& \leq
s \cdot \exp \Big( n \cdot \log  \big( \frac{De}{n} \sum_{j=0}^{t-1} \vphi( s+j ) \big)  \Big) .
\end{align*}
\end{lem}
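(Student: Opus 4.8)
The plan is to bound the probability that the radius increases by $n$ over $t$ steps by controlling, for each individual step, the chance that the newly added particle lands far out. First I would observe that if $\rad(A_{s+t}) \geq \rad(A_s) + n$, then among the $t$ particles added at times $s+1, \ldots, s+t$, there must be a ``chain'' of particles reaching outward: more precisely, there is an increasing sequence of times and a corresponding path of added vertices along which the distance from $o$ grows by $n$. Since each added vertex $a_{j+1}$ lies in $\p A_j$, it is within distance $1$ of $A_j$, so a single particle can only extend the radius by $O(1)$; to gain $n$ in radius one needs roughly $n$ ``successful'' particles landing near the current frontier. The combinatorial heart is therefore: choose which $n$ of the $t$ added particles do the work (a union bound over at most $\binom{t}{n}$ choices, or more cleanly over ordered tuples), and for each such particle bound the probability it lands at (or near) a prescribed boundary vertex.

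Next I would use the $\vphi$-volume Beurling estimate to control each single landing probability. At step $j$ (with $s \leq j < s+t$), the particle $a_{j+1}$ is distributed according to $h_{A_j \cup \p A_j}$, and $|A_j \cup \p A_j| \geq |A_j| = |A_s| + (j-s) \geq s + (j - s)$ — wait, more simply $|A_j| \geq j+1 \geq s+1$ using $A_0 = \{o\}$, but the sharper count $|A_j| = |A_0| + j = j+1$ and, conditioning on $A_s$, $|A_j| \geq |A_s| + (j - s)$; in any case $|A_j \cup \p A_j| \geq s + (j-s) = j$ is too weak, so I would use $|A_j| \geq |A_s|+ (j-s)$ together with $|A_s| \geq s$ (as $A_0=\{o\}$ gives $|A_s| = s+1$) to get $|A_j \cup \p A_j| \geq s + (j-s)$ — the point being that the relevant bound on the harmonic-measure mass at any fixed vertex at step $s+i$ is $\vphi(s+i)$, matching the $\sum_{j=0}^{t-1}\vphi(s+j)$ in the statement. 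Because a particle landing ``near'' a target vertex $x$ means landing in the ball of radius $O(1)$ around $x$, which by bounded degree contains at most $D^{O(1)}$ vertices, the probability of a given particle making a prescribed unit-step of outward progress is at most (const)$\cdot \vphi(\cdot)$; the factor $D$ in the final bound absorbs this.

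Then I would assemble the union bound. Fixing a choice of which particle achieves the $i$-th unit of outward progress, the probability of the whole outward chain is at most $\prod$ of the individual landing bounds, which (after using monotonicity of $\vphi$ and grouping) is at most $\big(\tfrac{1}{n}\sum_{j=0}^{t-1} D e\, \vphi(s+j)\big)^n$ — here the $1/n$ and the $e$ come from bounding $\binom{t}{n}$-type counts via $\binom{t}{n} \leq (et/n)^n$ and folding the sum of $\vphi$'s over the available time slots into a single average. The extra factor $s$ in front comes from summing over the possible choice of the starting time/location of the chain along the existing aggregate boundary $\p A_s$, which has size at most $D \cdot \rad(A_s)$-ish but more crudely is controlled by $|A_s| \leq s + 1 \leq $ (const)$\,s$. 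Taking a logarithm rewrites $\big(\tfrac{De}{n}\sum_j \vphi(s+j)\big)^n = \exp\big(n\log(\tfrac{De}{n}\sum_j \vphi(s+j))\big)$, giving exactly the claimed inequality.

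The main obstacle I anticipate is making the ``chain of outward progress'' argument precise: one must carefully define the event that the radius grows, extract a genuine sequence of $n$ particles each responsible for one unit of progress, and ensure the conditioning is handled correctly so that at the moment particle $a_{j+1}$ is sampled, the bound $h_{A_j \cup \p A_j}(x) \leq \vphi(|A_j|)$ applies with $|A_j|$ at least $s + (\text{number of prior particles in the chain})$ — so that the product telescopes into $\prod_i \vphi(s+i)$ rather than $\vphi(s)^n$. Getting the index bookkeeping right (which $\vphi$-argument goes with which factor, and why the sum $\sum_{j=0}^{t-1}\vphi(s+j)$ rather than $t\,\vphi(s)$ appears) is where the real care is needed; the rest is a routine union bound combined with the standard estimate $\binom{t}{n}\le (et/n)^n$.
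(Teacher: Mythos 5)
Your plan follows the paper's proof in all essentials: from the event $\{\rad(A_{s+t})\ge\rad(A_s)+n\}$ one extracts a simple path $v_0\in A_s,v_1,\dots,v_n$ whose vertices are added in order (the paper's ``filled in order''; the precise extraction is via the attachment genealogy, each added vertex being adjacent to an earlier one), takes a union bound over the at most $|A_s|\cdot D^n$ such paths, and bounds each prescribed landing by the volume Beurling estimate. One small correction: the factor $D^n$ is just the count of such paths ($v_i$ ranges over the neighbours of $v_{i-1}$); your ``landing within distance $O(1)$ of a prescribed vertex'' device is unnecessary and would give $D^{O(1)}$ per step, spoiling the exact constant $De$ in the stated bound.

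The one genuinely different ingredient you propose, and also the soft spot, is the time-combinatorics. The paper fixes only the path, lets $Z_j$ indicate that the next still-missing path vertex is added at step $j$, uses $\E[Z_j\mid A_0,\dots,A_{j-1}]\le\vphi(|A_{j-1}|)$ where $|A_{j-1}|$ increases by one at \emph{every} step deterministically (not just at ``chain'' steps), dominates $\sum_j Z_j$ by independent Bernoullis with means $\vphi(s+j)$, and applies the Chernoff-type Lemma \ref{lem:LD} to $\{\sum_j Z_j\ge n\}$. Your substitute --- a union bound over which $n$ of the $t$ times succeed --- can be made to work, but only through the time-indexed bound $\vphi(t_i)$ for a success at time $t_i$ (which you do state in passing) combined with the multinomial inequality $\sum_{s<t_1<\cdots<t_n\le s+t}\prod_{i=1}^n\vphi(t_i)\le\tfrac1{n!}\bigl(\sum_{j=1}^{t}\vphi(s+j)\bigr)^n\le\bigl(\tfrac{e}{n}\sum_{j=0}^{t-1}\vphi(s+j)\bigr)^n$, the $e/n$ coming from $n!\ge(n/e)^n$. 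The bookkeeping you actually assemble ($|A_j|\ge s+{}$number of prior chain particles, times $\binom{t}{n}\le(et/n)^n$) yields only $\bigl(\tfrac{et}{n}\vphi(s)\bigr)^n$-type bounds, i.e.\ $t\,\vphi(s)$ in place of $\sum_{j=0}^{t-1}\vphi(s+j)$ --- exactly the loss you flag at the end. That loss is harmless in the power-law regime but ruins case (2) of Theorem \ref{thm:kesten bound volume} (hence the exponential-growth and non-amenable applications), so the deterministic growth of $|A_j|$ with the step index is the one fact you must exploit; it is precisely what the paper's stochastic-domination-plus-Lemma-\ref{lem:LD} step encodes, and with it your union-over-times argument is a valid, slightly more elementary, alternative to the exponential-moment computation.
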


First an auxiliary large deviations calculation:
\begin{lem}
\label{lem:LD}
Let $B = \sum_{n=1}^k Z_n$ for independent Bernoulli random variables $(Z_n)_n$,
each of mean $\E Z_n = p_n$.  Then, for any $C>1$ we have
$$ \Pr [ B \geq C \E B ] \leq \exp \sr{ - \E B \cdot C \log (C/e)  } . $$
\end{lem}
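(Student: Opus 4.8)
The plan is to run the standard exponential Chernoff bound and then optimize the free parameter. First I would fix $\lambda > 0$ and apply Markov's inequality to $e^{\lambda B}$, which gives
$$ \Pr[B \ge C \E B] \le e^{-\lambda C \E B} \cdot \E e^{\lambda B}. $$
By independence of the $Z_n$ the moment generating function factorizes, $\E e^{\lambda B} = \prod_{n=1}^k \E e^{\lambda Z_n} = \prod_{n=1}^k (1 - p_n + p_n e^{\lambda})$, and using $1 + x \le e^x$ with $x = p_n (e^{\lambda} - 1)$ we get $\E e^{\lambda B} \le \exp\big( (e^{\lambda} - 1) \sum_{n=1}^k p_n \big) = \exp\big( (e^{\lambda} - 1) \E B \big)$.

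Combining these, $\Pr[B \ge C \E B] \le \exp\big( \E B \cdot (e^{\lambda} - 1 - \lambda C) \big)$ for every $\lambda > 0$. Next I would choose $\lambda$ to minimize the exponent $e^{\lambda} - 1 - \lambda C$; differentiating in $\lambda$ gives the critical equation $e^{\lambda} = C$, and since $C > 1$ the minimizer $\lambda = \log C$ is positive and hence admissible. Substituting it yields $e^{\lambda} - 1 - \lambda C = C - 1 - C \log C \le C - C \log C = -C \log(C/e)$, where the inequality merely discards the harmless $-1$. This gives exactly the claimed bound $\Pr[B \ge C \E B] \le \exp\big( -\E B \cdot C \log(C/e) \big)$.

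There is essentially no obstacle here, as every step is a textbook manipulation; the only points to watch are that the choice $\lambda = \log C$ is legitimate precisely because $C > 1$ is assumed, and that in the range $1 < C \le e$ the right-hand side exceeds $1$, so the statement is vacuous there but still correct. One could retain the $-1$ term for a marginally sharper constant, but this is not needed for the application in Lemma \ref{lem:kesten}.
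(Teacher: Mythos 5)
Your proof is correct and follows essentially the same route as the paper: the Bernstein/Chernoff exponential-moment bound $\E e^{\lambda B}\le \exp((e^{\lambda}-1)\E B)$, Markov's inequality, and the optimal choice $e^{\lambda}=C$ (valid since $C>1$), then discarding the harmless $-1$ to match the stated bound. No issues.
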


\begin{proof}
We use the well known method by Bernstein.
For $\alpha > 0$ we may bound the exponential moment of $B$ as follows:
\begin{align*}
\E e^{\alpha B} & = \prod_{n=1}^k \E e^{\alpha Z_n} = \prod_{n=1}^k ( (e^{\alpha} -1) p_n + 1 )
\leq \exp \sr{ ( e^{\alpha} -1) \E B } .
\end{align*}
By Markov's inequality,
$$ \Pr [ B \geq C \E B ] = \Pr [ e^{\alpha B} \geq e^{\alpha C \E B} ] \leq \exp \sr{ ( e^{\alpha} - 1) \E B - \alpha C \E B } . $$
So we wish to minimize the term $e^{\alpha} - 1 - \alpha C$ over positive $\alpha$.
Taking derivatives this is minimized when $e^{\alpha} = C$ (recall that $C>1$), so
$$ \Pr [ B \geq C \E B ] \leq \exp \sr{ \E B \cdot (C-1 - C \cdot \log C ) } . $$
\end{proof}

\begin{proof}[Proof of Lemma \ref{lem:kesten}]
Let $A_0 = A$ be some finite starting set of radius $r = \rad(A_0)$.

We say that a sequence of vertices $v_1, v_2, \ldots, v_n$ is {\em filled in order up to time $t$}, if
there exist times $0 < t_1 < t_2 < \cdots < t_n \leq t$ such that
$A_{t_j} = A_{t_j-1} \cup \set{ v_j}$.

Fix such a sequence $v_1, \ldots, v_n$.
For any finite subset $S \subset G$ let $v(S)$ be the first vertex in $v_1,\ldots, v_n$
not in $S$; that is,
$v(S) = v_{J(S)}$ where
$$ J(S) = \min \set{ j \ : \ v_1, \ldots, v_{j-1} \in S } . $$
consider the random variables $Z_1, \ldots, Z_j , \ldots$, defined as the indicators
of the events $\set{ A_j= A_{j-1} \cup \set{ v(A_{j-1}) } }$.
That is, $Z_j$ is the indicator of the event that the upcoming vertex from $v_1,\ldots,v_n$
that has not already been added to the cluster, is now added.

When $G$ has a $\vphi$-volume Beurling estimate, then a.s.,
\begin{align} \label{eqn:Zj}
\E [ Z_{j} \ | \ A_0, \ldots, A_{j-1} ] &  \leq \vphi(|A_{ j-1}|) = \vphi(|A|+j-1) .
\end{align}
Thus, $\sum_{j=1}^t Z_j$ is stochastically dominated by $\sum_{j=1}^t W_j$,
where $W_1, \ldots, W_n$ are independent
Bernoulli random variables of mean $\E[W_j] = \vphi(|A|+j-1)$.

Denote
$ I_\vphi(|A|,t) :=
\sum_{j=0}^{t-1} \vphi (|A|+j) = \sum_{j=1}^t \E W_j . $
Now, the event that $v_1, \ldots, v_n$ are filled in order up to time $t$ implies that
$\sum_{j=1}^t Z_j \geq n$.  Thus, 
using Lemma \ref{lem:LD}, we have
that the probability that $v_1, \ldots, v_n$ are filled in order up to time $t$ is bounded by
$$ \exp \big( n \cdot \log ( \tfrac{e}{n }  I_\vphi(|A| , t) ) \big) . $$

Finally, note that if $\rad(A_t) \geq r+ n$ then there must exist a simple path
$v_0, v_1, \ldots, v_n$ in $G$, such that $v_0 \in A$ and $v_1, \ldots, v_n$
are filled in order up to time $t$.
Since there are at most $|A| \cdot D^n$ possibilities for $v_0 , v_1 , \ldots, v_n$,
we obtain that
\begin{align*}
\Pr [ \rad(A_t)  \geq \rad(A_0) + n \ | \ A_0 = A ] %\\&
& \leq
|A| \cdot \exp \Big( - n \cdot \log  \big( \tfrac{n}{D e  I_\vphi(|A| , t) } \big)  \Big) .
\end{align*}
The lemma follows using the Markov property.
\end{proof}

A slight alteration of the above proof provides a similar result for the case where we have a
radius Beurling estimate.

\begin{lem} \label{lem:kesten radius}
Let $G$ be a graph rooted at $o$ of bounded degree $\sup_x \deg(x) \leq D$.
Let $(A_t)_t$ be DLA on $G$ started at $A_0 = \set{o}$.
Suppose that $G$ has a $\vphi$-radius Beurling estimate.
Then, for any $t >0, s \geq n>0$ we have
\begin{align*}
\Pr [ \rad(A_{s+t})  \geq \rad(A_s) + n \ | \ A_s ] & \leq
s \cdot \exp \Big( n \cdot \log  \big( \frac{De}{n} \vphi(\rad(A_s) ) \cdot t  \big)  \Big) .
\end{align*}
\end{lem}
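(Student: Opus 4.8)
The plan is to re-run the proof of Lemma~\ref{lem:kesten} almost verbatim, changing only the single input that bounds the conditional means of the filling indicators $Z_j$, and then noticing that the resulting sum collapses to something trivial. Concretely, by the Markov property it suffices to bound, for an arbitrary finite starting set $A_0 = A$, the probability that $\rad(A_t) \ge \rad(A_0) + n$, and substitute $A = A_s$ at the end. So I would fix a simple path $v_0, v_1, \dots, v_n$ in $G$, define $v(S)$, $J(S)$ and the indicators $Z_j$ of the events $\set{ A_j = A_{j-1} \cup \set{v(A_{j-1})}}$ exactly as in the proof of Lemma~\ref{lem:kesten}; nothing there changes up to the stochastic-domination step, and only the per-step mean of $Z_j$ needs to be revisited.

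The new input is this: when $G$ has a $\vphi$-radius Beurling estimate, then in place of~\eqref{eqn:Zj} we have, a.s.,
\[
\E\br{ Z_j \mid A_0, \dots, A_{j-1} } \;\le\; \vphi\sr{ \rad(A_{j-1}) } .
\]
This is where the two arguments diverge. Since the DLA cluster only grows, $\rad(\cdot)$ is non-decreasing along the chain, so $\rad(A_{j-1}) \ge \rad(A_0)$ for every $j \ge 1$; as $\vphi$ is non-increasing this yields $\E\br{ Z_j \mid A_0, \dots, A_{j-1}} \le \vphi(\rad(A_0))$ \emph{uniformly} in $j$. Hence $\sum_{j=1}^t Z_j$ is stochastically dominated by $\sum_{j=1}^t W_j$ for independent Bernoulli variables $W_j$ of common mean $\vphi(\rad(A_0))$, so $\E \sum_{j=1}^t W_j = t \cdot \vphi(\rad(A_0))$. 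This plays the role of $I_\vphi(|A|,t)$ from the volume proof, now reduced to the constant sum $t\,\vphi(\rad(A_0))$.

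Now I would feed this into Lemma~\ref{lem:LD} with $C \cdot \E B = n$, i.e.\ $C = n / (t\,\vphi(\rad(A_0)))$. If $C > 1$, then since the event that $v_1, \dots, v_n$ are filled in order up to time $t$ forces $\sum_{j=1}^t Z_j \ge n$, that event has probability at most $\exp\sr{ n \log\sr{ \tfrac{e}{n}\, t\, \vphi(\rad(A_0)) } }$; if instead $C \le 1$, the right-hand side already exceeds $1$, so there is nothing to prove (the same harmless caveat present in Lemma~\ref{lem:kesten}). Exactly as before, $\rad(A_t) \ge \rad(A_0) + n$ forces the existence of a simple path $v_0, \dots, v_n$ with $v_0 \in A_0$ whose tail $v_1, \dots, v_n$ is filled in order up to time $t$; there are at most $|A_0| \cdot D^n$ such paths, so a union bound gives $\Pr[\rad(A_t) \ge \rad(A_0) + n \mid A_0] \le |A_0| \cdot \exp\sr{ n\log\sr{ \tfrac{e}{n} t \vphi(\rad(A_0)) } }$, and the Markov property (taking $A_0 = A_s$ after translating times by $s$, with $|A_s| = s+1$) yields the stated bound.

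I do not expect a genuine obstacle here — this really is the ``slight alteration'' advertised in the text. The one point that needs care is the monotonicity observation of the second paragraph: it is precisely what allows one to replace each position-dependent bound $\vphi(\rad(A_{j-1}))$ by the single value $\vphi(\rad(A_s))$ throughout the window of $t$ steps, and hence what turns the sum $I_\vphi$ of Lemma~\ref{lem:kesten} into the product $t\,\vphi(\rad(A_s))$ of the statement. The path-counting factor $D^n$ and the Bernstein-type large-deviation bookkeeping (Lemma~\ref{lem:LD}) are carried over verbatim from the volume case.
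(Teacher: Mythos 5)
Your proposal is correct and follows essentially the same route as the paper: the paper's proof also just replaces \eqref{eqn:Zj} by $\E[Z_j \mid A_0,\dots,A_{j-1}] \le \vphi(\rad(A_{j-1})) \le \vphi(\rad(A_0))$ (the monotonicity of $\rad$ along the chain that you spell out), deduces the bound $\exp\bigl( n \log\bigl( \tfrac{e}{n}\,\vphi(\rad(A))\, t \bigr) \bigr)$ for a fixed path, and concludes by the same path-counting and Markov-property step. Your extra remarks (the harmless $C\le 1$ case and $|A_s|=s+1$ versus the prefactor $s$) are minor points the paper leaves implicit and do not change the argument.
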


\begin{proof}
Basically, the proof is the same as that of Lemma \ref{lem:kesten},
except that \eqref{eqn:Zj} becomes
\begin{align}
\E [ Z_{j} \ | \ A_0 , \ldots, A_{j-1} ]  & \leq  \vphi(\rad(A_{j-1}) ) \leq \vphi(\rad(A)) .
\end{align}
This leads to a bound on the probability that $v_1, \ldots, v_n$ are filled in order up to time $t$, which is:
$$ \exp \Big( n \cdot \log \big( \tfrac{e}{n}  \vphi(\rad(A)) \cdot t \big) \Big) . $$
Continuing as in the proof of Lemma \ref{lem:kesten},
by summing over all possibilities for $v_1,\ldots,v_n$, we obtain the required bound.
\end{proof}

\subsection{DLA growth upper bounds}

\begin{thm} \label{thm:kesten bound volume}
Let $G$ be a bounded degree
graph rooted at $o$. Let $(A_t)_t$ be DLA on $G$ started at $A_0=\{o\}$.
Suppose that $G$ satisfies a $\vphi$-volume Beurling estimate, with $\vphi$ one of the following:
\begin{enumerate}
\item $\vphi(s) = C (\log s)^{\beta} s^{-\alpha}$ for some $\alpha \in (0,1)$, $\beta \geq 0$,
\item $\vphi(s) = C (\log s)^{\beta} s^{-1}$ for some $\beta \geq 0$,
\end{enumerate}
with $C>0$ some constant.

Then, respectively, we have that:
\begin{enumerate}
\item $\limsup_{t \to \infty}  (\log t)^{-\beta}  t^{\alpha-1} \cdot \rad(A_t)  < \infty$ a.s.
\item $\limsup_{t \to \infty}  (\log t)^{-\beta-1} \cdot \rad(A_t) < \infty$ a.s.
\end{enumerate}
\end{thm}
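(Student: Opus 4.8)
The plan is to derive both statements as a direct consequence of Lemma~\ref{lem:kesten}, choosing the parameters $s$, $t$, $n$ cleverly in terms of a scale and invoking Borel--Cantelli. The basic mechanism is: we run DLA in ``epochs'' where the aggregate size roughly doubles (or multiplies by a fixed factor), and on each epoch we use Lemma~\ref{lem:kesten} to bound the probability that the radius grows by more than a constant multiple of the expected growth on that epoch.

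\textbf{Step 1: Estimate the key integral $I_\vphi(s,t)=\sum_{j=0}^{t-1}\vphi(s+j)$.} For $\vphi(s)=C(\log s)^\beta s^{-\alpha}$ with $\alpha\in(0,1)$, one has $I_\vphi(s,t)\le C'(\log(s+t))^\beta\cdot\bigl((s+t)^{1-\alpha}-s^{1-\alpha}\bigr)$ by comparing to an integral; in particular if we take $t$ comparable to $s$ (say $t=s$), then $I_\vphi(s,s)\asymp (\log s)^\beta s^{1-\alpha}$. For $\vphi(s)=C(\log s)^\beta s^{-1}$, similarly $I_\vphi(s,s)\asymp (\log s)^{\beta+1}$. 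This is the quantity that governs the exponent in Lemma~\ref{lem:kesten}: we want to choose $n$ large enough (as a function of $s$) that $\tfrac{De}{n}I_\vphi(s,t)<1$, so that the logarithm is negative and the bound $s\cdot\exp(n\log(\tfrac{De}{n}I_\vphi(s,t)))$ is summably small.

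\textbf{Step 2: Set up the scales and the union bound.} Fix $s_k=2^k$ and apply Lemma~\ref{lem:kesten} with $s=s_k$, $t=s_k$ (so $s_{k+1}=s_k+t$), and
$$ n=n_k:=\lceil K\cdot (\log s_k)^\beta s_k^{1-\alpha}\rceil $$
in case (1), respectively $n_k:=\lceil K\cdot(\log s_k)^{\beta+1}\rceil$ in case (2), for a constant $K$ chosen large enough (depending on $C$, $D$) that $\tfrac{De}{n_k}I_\vphi(s_k,s_k)\le \tfrac12$. Then Lemma~\ref{lem:kesten} gives
$$ \Pr[\rad(A_{s_{k+1}})\ge\rad(A_{s_k})+n_k\mid A_{s_k}]\le s_k\exp(-n_k\log 2)\le 2^k\cdot 2^{-Kc\,(\log 2^k)^{\beta}2^{k(1-\alpha)}}, $$
which is summable in $k$ since the exponent dominates $k\log 2$ for $k$ large (here $1-\alpha>0$; in case (2) the exponent is $\asymp k^{\beta+1}$, still dominating $k$). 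By Borel--Cantelli, a.s.\ for all large $k$ we have $\rad(A_{s_{k+1}})-\rad(A_{s_k})\le n_k$.

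\textbf{Step 3: Telescope and interpolate.} Summing the a.s.\ bound over $k$ from some $k_0$ up to $K(t):=\lfloor\log_2 t\rfloor$ gives $\rad(A_{s_{K(t)}})\le \rad(A_{s_{k_0}})+\sum_{k=k_0}^{K(t)-1}n_k$. In case (1) the sum $\sum_{k\le K(t)}(\log 2^k)^\beta 2^{k(1-\alpha)}$ is dominated by its last term (geometric-type growth), which is $\asymp (\log t)^\beta t^{1-\alpha}$; hence $\rad(A_{s_{K(t)}})=O((\log t)^\beta t^{1-\alpha})$. Since $A_t\subset A_{s_{K(t)+1}}$ and $s_{K(t)+1}\le 2t$, monotonicity of $\rad$ in $t$ upgrades this to $\rad(A_t)=O((\log t)^\beta t^{1-\alpha})$, i.e.\ $\limsup_t(\log t)^{-\beta}t^{\alpha-1}\rad(A_t)<\infty$ a.s. In case (2) the same telescoping gives $\sum_{k\le K(t)}(\log 2^k)^{\beta+1}\asymp (\log t)^{\beta+2}$? --- one must be slightly careful: $\sum_{k=1}^{N}k^{\beta+1}\asymp N^{\beta+2}$, so naively this would give $(\log t)^{\beta+2}$, which is worse than claimed. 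To get the sharp $(\log t)^{\beta+1}$ one instead uses a \emph{single} application of Lemma~\ref{lem:kesten} with $s=1$ (or $s$ a fixed constant) and $t$ the full time, $n=n(t):=\lceil K(\log t)^{\beta+1}\rceil$: then $I_\vphi(1,t)\asymp(\log t)^{\beta+1}$, the bound is $s\exp(-n\log 2)\le\exp(-Kc(\log t)^{\beta+1})=t^{-Kc(\log t)^\beta}$, summable along $t=2^k$, and Borel--Cantelli plus monotonicity finish it. The same one-shot argument also works in case (1) and is cleaner; I would present it that way for both, with the epoch argument only as a fallback.

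\textbf{Main obstacle.} The delicate point is matching the exact exponents claimed, particularly getting $(\log t)^{\beta+1}$ and not $(\log t)^{\beta+2}$ in case (2), and $t^{1-\alpha}(\log t)^\beta$ with the correct power of the logarithm in case (1): this forces the ``one-shot'' application of Lemma~\ref{lem:kesten} from a bounded starting configuration rather than a dyadic-epoch scheme, and requires verifying that $I_\vphi(C,t)$ has precisely the asymptotics $(\log t)^\beta t^{1-\alpha}$ (resp.\ $(\log t)^{\beta+1}$) up to constants, and that with $n(t)$ of that order the coefficient $\tfrac{De}{n(t)}I_\vphi(C,t)$ is bounded by a constant strictly less than $1$ for all large $t$. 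Everything else is routine: the large-deviation input is already packaged in Lemma~\ref{lem:kesten}, and the passage from dyadic times to all $t$ is immediate from monotonicity of $t\mapsto\rad(A_t)$.
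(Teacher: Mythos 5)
Your proposal is correct and follows essentially the same route as the paper: case (1) is handled by applying Lemma \ref{lem:kesten} over doubling epochs with $n \asymp (\log s)^{\beta} s^{1-\alpha}$, Borel--Cantelli, and telescoping (the paper's $c(\alpha)$ factor is exactly your ``last term dominates'' observation), while case (2) is handled, as in the paper, by the one-shot application with $s=1$ and $n \asymp (\log t)^{\beta+1}$, which you correctly identified as necessary to avoid the $(\log t)^{\beta+2}$ loss. Your remark that the one-shot choice also works in case (1) is a valid minor simplification, not a different method.
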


\begin{proof}
In each of the cases, in order to apply Lemma \ref{lem:kesten}
we need to bound $I_\vphi(s,s+t) := \sum_{j=0}^{t-1} \vphi(s+j)$.
Indeed, by perhaps changing the constant $C$, we have
\begin{enumerate}
\item $I_\vphi(s,s+t) \leq t \cdot C (\log s)^\beta s^{-\alpha}$, \label{i:alpha}
\item $I_\vphi(s,s+t) \leq C (\log (t+s))^{\beta+1}$. \label{i:polylog}
\end{enumerate}

In Case \eqref{i:alpha}, we choose $t= c s$ and $n = \lceil D e^2 C t (\log s)^\beta s^{-\alpha} \rceil
\asymp c' (\log s)^\beta s^{1-\alpha}$, to obtain from Lemma \ref{lem:kesten},
$$ \Pr [ \rad(A_{2s}) \geq \rad(A_s) + n  \ | \ A_s ] \leq s \cdot e^{- n } , $$
and by Borel-Cantelli we have that
$$  \Pr [ \rad(A_{2s}) \geq \rad(A_s)  + c' (\log s)^\beta s^{1-\alpha} \ \textrm{i.o.} ] = 0 . $$
Thus, a.s.\ the sequence $r_s : = \rad(A_{s})$ satisfies
that there exists a (random) constant $K>0$ such that for all $s$ we have
$r_{2s} \leq r_s + K (\log s)^\beta s^{1-\alpha}$.  That is,
a.s.\ there exists $K >0$ such that for all $s$,
$$ r_s \leq c(\alpha) K (\log s)^\beta s^{1-\alpha} \qquad
c(\alpha) = \tfrac{2^{1-\alpha} }{2^{1-\alpha} - 1 } . $$

Case \eqref{i:polylog} is dealt with similarly.
Choose $s=1, n = K (\log (t+1) )^{\beta+1}$ in Lemma \ref{lem:kesten},
for large enough $K = K(C,D)>0$ to obtain that
$$ \Pr [ \rad(A_t) \geq n+1 ] \leq t \cdot e^{- 3 \log (t+1) } \leq t^{-2} . $$
Since this is summable, we are done by Borel-Cantelli.
\end{proof}

\begin{thm} \label{thm:kesten bound radius}
Let $G$ be a bounded degree
graph rooted at $o$. Let $(A_t)_t$ be DLA on $G$ started at $A_0=\{o\}$.
Suppose that $G$ satisfies a $\vphi$-radius Beurling estimate, with
$\vphi(r) = C r^{-\alpha}$ for some $\alpha, C>0$.

Then, a.s.\
$$ \limsup_{t \to \infty} t^{-1/(1+\alpha) } \cdot \rad(A_t) < \infty . $$
\end{thm}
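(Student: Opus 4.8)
The plan is to mimic the proof of Theorem~\ref{thm:kesten bound volume}, Case~\eqref{i:alpha}, but using the radius Beurling estimate and Lemma~\ref{lem:kesten radius} in place of Lemma~\ref{lem:kesten}. The key difference is that the large-deviation bound in Lemma~\ref{lem:kesten radius} involves $\vphi(\rad(A_s)) \cdot t = C \rad(A_s)^{-\alpha} t$ rather than a sum over the heat kernel. So the natural strategy is a bootstrap: control $\rad(A_{s+t}) - \rad(A_s)$ in terms of the current radius $\rad(A_s)$ and a time increment $t$, choosing $t$ as a suitable power of $s$, and then iterate.

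Concretely, here is how I would proceed. Write $r_t = \rad(A_t)$ and suppose inductively that along a subsequence of times $t = t_k$ we have $r_{t_k} \le M t_k^{1/(1+\alpha)}$ for some (random, a.s.\ finite) constant $M$. At stage $k$, with $s = t_k$, I would run the walk for an additional $t = t_{k+1} - t_k$ steps and apply Lemma~\ref{lem:kesten radius} with this $s$ and $t$ and with $n \asymp \frac{De}{1} \cdot C r_s^{-\alpha} t \cdot e$, i.e.\ $n$ chosen to be a constant multiple of $C r_s^{-\alpha} t$ so that the argument of the exponential becomes $n \log(1/e) = -n$, giving
\begin{align*}
\Pr[\, r_{s+t} \ge r_s + n \mid A_s \,] \le s \cdot e^{-n}.
\end{align*}
For Borel--Cantelli to apply I need $n$ to grow at least like $(2+\eps)\log s$, and I need the resulting recursion $r_{s+t} \le r_s + n$ to close up to the claimed exponent. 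The cleanest choice is a geometric sequence of times $t_k = 2^k$ (so $s = 2^k$, $t = 2^k$), in which case $n \asymp r_s^{-\alpha} \cdot s$. If $r_s \le M s^{1/(1+\alpha)}$ then $n \asymp M^{-\alpha} s^{1 - \alpha/(1+\alpha)} = M^{-\alpha} s^{1/(1+\alpha)}$, which is a constant multiple of $s^{1/(1+\alpha)} = (t_{k+1})^{1/(1+\alpha)} \cdot 2^{-1/(1+\alpha)}$; summing the increments over a geometric series then reproduces $r_{t_K} \le (\text{const}) \cdot t_K^{1/(1+\alpha)}$, and interpolation over intermediate times (using monotonicity of $r_t$) gives the $\limsup$ statement for all $t$.

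The subtle point, and the main obstacle, is that the choice of $n$ depends on $r_s$, which is itself random — so this is genuinely a bootstrap rather than a clean one-shot Borel--Cantelli. I would handle this by fixing a deterministic grid of candidate values for $r_s$: for each dyadic scale $s = 2^k$ and each integer $m$, consider the event $\{ r_s \in [2^{m}, 2^{m+1}) \text{ and } r_{2s} \ge r_s + c\, 2^{-\alpha m} s \}$, bound its probability via Lemma~\ref{lem:kesten radius} by $s \cdot \exp(-c' 2^{-\alpha m} s)$, and sum over the relevant range of $m$ (only $m \lesssim \log s$ contribute a non-summable term, and even those are summable once $2^{-\alpha m} s \gtrsim 3\log s$, i.e.\ $2^m \lesssim (s/\log s)^{1/\alpha}$ — one checks this is automatically satisfied on the event that $r_s$ is not already super-polynomially small). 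After a.s.\ removing finitely many bad events, one gets: for all large $k$, $r_{2^{k+1}} \le r_{2^k} + C \max(r_{2^k}^{-\alpha} 2^k,\ \log 2^k)$, and a standard induction on $k$ — splitting into the regime where the first term dominates (where the $1/(1+\alpha)$ exponent is the fixed point of the recursion) — yields $r_{2^k} = O(2^{k/(1+\alpha)})$. Finally, since $t \mapsto r_t$ is nondecreasing and $|A_t| = t+1$, for $t \in [2^k, 2^{k+1})$ we have $r_t \le r_{2^{k+1}} = O(2^{(k+1)/(1+\alpha)}) = O(t^{1/(1+\alpha)})$, completing the proof.
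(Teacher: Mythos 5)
Your reduction to the recursion $r_{2^{k+1}} \le r_{2^k} + C\max\bigl(r_{2^k}^{-\alpha}2^k,\log 2^k\bigr)$ is essentially sound: the union bound over the dyadic grid of possible radii, with $n \asymp \max(r_s^{-\alpha}s,\log s)$, does give summable probabilities, so by Borel--Cantelli the recursion holds for all large $k$. The genuine gap is the final step: this recursion does \emph{not} imply $r_{2^k}=O(2^{k/(1+\alpha)})$. The increment bound $C\,r_{2^k}^{-\alpha}2^k$ is \emph{decreasing} in the current radius, so exactly in the regime $r_{2^k}\ll 2^{k/(1+\alpha)}$ the recursion permits an increment far larger than $2^{(k+1)/(1+\alpha)}$ --- as large as order $2^k(\log 2^k)^{-\alpha}$ when $r_{2^k}$ is only logarithmic, and nothing in the hypotheses rules out a small radius at time $2^k$ (there is no general lower bound of order $t^{1/(1+\alpha)}$ on $\rad(A_t)$, only the trivial $\rad(A_t)\gtrsim \log t$ from bounded degree). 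A monotone sequence satisfying your recursion can jump from a tiny value to nearly $2^k$ at infinitely many scales $k$, so its limsup ratio against $2^{k/(1+\alpha)}$ is infinite; hence no induction ``in the regime where the first term dominates'' can close, and the fixed-point reasoning only works when one already knows $r_{2^k}\gtrsim 2^{k/(1+\alpha)}$, which is unavailable. The loss comes from Lemma \ref{lem:kesten radius} freezing $\vphi$ at the radius at the \emph{start} of the window: over a time-doubling window the radius may grow by a huge factor, so the frozen estimate can be far from the running one.

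The paper's proof windows by radius rather than by time, which is exactly what repairs this. At the stopping time $\tau_r=\inf\{t : \rad(A_t)\ge r\}$ it applies Lemma \ref{lem:kesten radius} with $n=\rad(A_{\tau_r})=r$ and $t\asymp r^{1+\alpha}$; within such a window the radius at most doubles on the complement of the bad event, so the frozen estimate $\vphi(r)$ is comparable to the true one throughout, and the prefactor $s$ is absorbed using $s\le D^{\rad(A_s)}$, giving $\Pr[\tau_{2r}\le\tau_r+c\,r^{1+\alpha}]\le e^{-r}$. Borel--Cantelli over $r$ yields a random $K>0$ with $\tau_{2r}>\tau_r+K r^{1+\alpha}$ for all $r$, and the deterministic conversion ($\rad(A_s)<2r$ for $s\le K r^{1+\alpha}$) gives the theorem with no bootstrap and no need to control the small-radius regime. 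If you want to salvage your scheme, you would have to subdivide each time window at the (random) radius-doubling times --- which is precisely the paper's stopping-time argument.
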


\begin{proof}
As before, the theorem follows from choosing $t,n$ appropriately in Lemma \ref{lem:kesten radius}.
Fix $s$ and set $n = \rad(A_s)$ and $t = \lfloor \frac{n}{D^2 e^2 \vphi(\rad(A_s))} \rfloor$.
Thus, by Lemma \ref{lem:kesten radius},
\begin{align} \label{eqn:radius}
\Pr [ \rad(A_{s+t}) \geq 2 \rad (A_s) \ | \ A_s ] & 
\leq \exp ( - \rad(A_s) ) ,
\end{align}
where we have used that $s = |A_s| \leq D^{\rad(A_s)}$.
When $\vphi(r) = C r^{-\alpha}$ then $t = c \cdot \rad(A_s)^{1+\alpha}$.

Define the following stopping times:
$$ \tau_r : = \inf \set{ s \ : \ \rad(A_s) \geq r } . $$
\eqref{eqn:radius} tells us that for any $r$,
$$ \Pr [ \tau_{2r} \leq \tau_r + c r^{1+\alpha} ] \leq e^{-r} . $$
By Borel-Cantelli, we now have that a.s.\ there exists a (random) constant $K>0$ such that
for all $r$ we have
$\tau_{2r} > \tau_r + K r^{1+\alpha} . $
Since $\tau_r \leq s \iff \rad(A_s) \geq r$, we have
a.s.\ for all $s = \lceil K r^{1+\alpha} \rceil$ that $\rad(A_{s} ) < 2r \leq c(K) s^{1/(1+\alpha)}$.
Thus, a.s.\
$$ \limsup_{s \to \infty} s^{-1/(1+\alpha) } \cdot \rad(A_s) < \infty . $$
\end{proof}

\newpage

\section{Heat kernel decay}

\subsection{Radius Beurling estimates}

\begin{prop} \label{prop:radius Beurling from HK}
Let $G$ be a bounded degree graph rooted at $o$.
Assume that the heat kernel on $G$ satisfies
$$ \sup_{x,y} p_t(x,y) \leq C t^{-d/2}  $$
for some $d>2, C>0$ and all $t > 0$.

Then, $G$ satisfies a $\vphi$-radius Beurling estimate, with
$$ \vphi(r) =
\begin{cases}
\tfrac{C}{3-d}  r^{2-d}  & \textrm{ if } 2<d<3 , \\
C \log r \cdot r^{-1} & \textrm{ if } d = 3 , \\
C r^{-1} & \textrm{ if } d > 3 .
\end{cases}
$$
\end{prop}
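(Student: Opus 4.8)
The plan is to control the harmonic measure through its definition $h_A(x) = e_A(x)/\capac(A)$ together with a lower bound on capacity. Since $e_A(x) = \deg(x)\,\Pr_x[T_A^+ = \infty] \le D$, we have $\sup_{x\in A} h_A(x) \le D/\capac(A)$, so it suffices to prove $\capac(A) \gtrsim r/\big(1 + \sum_{k=1}^{r} k^{2-d}\big)$ for every connected finite $A \ni o$ with $\rad(A) = r$: up to a multiplicative constant the displayed $\varphi(r)$ is the reciprocal of this quantity in each regime, because $\sum_{k=1}^{r} k^{2-d} \asymp r^{3-d}/(3-d)$ for $2<d<3$, $\asymp \log r$ for $d=3$, and $= O(1)$ for $d>3$.

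For the capacity bound I would extract a well-spread subset of $A$. Fix $v\in A$ with $|v|=r$ and a path $o = u_0 \sim u_1 \sim \cdots \sim u_m = v$ lying in $A$; since $|u_i|$ changes by at most one at each step, for every $j\in\{0,1,\dots,r\}$ there is a vertex $w_j\in A$ with $|w_j|=j$. The set $W = \{w_0,\dots,w_r\}\subseteq A$ has $r+1$ distinct elements, so $\capac(A)\ge\capac(W)$ by monotonicity of capacity, and the estimate \eqref{eqn:capac} (together with $\deg\ge1$) gives
\[
\capac(W)\ \ge\ \frac{|W|}{\sup_{x\in W}\sum_{y\in W} g(x,y)}\ =\ \frac{r+1}{\sup_{0\le i\le r}\ \sum_{j=0}^{r} g(w_i,w_j)} .
\]
The reason for passing to $W$ is that it is at least as spread out as $\{0,1,\dots,r\}$: by the triangle inequality through $o$, $\dist(w_i,w_j) \ge \big| |w_i|-|w_j| \big| = |i-j|$. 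Hence, granted a Green's function estimate $g(x,y)\le C'\max(1,\dist(x,y))^{2-d}$ and the monotonicity of $t\mapsto\max(1,t)^{2-d}$, we get $g(w_i,w_j)\le C'\max(1,|i-j|)^{2-d}$ and therefore $\sum_{j=0}^{r} g(w_i,w_j)\le C'\big(1+2\sum_{k=1}^{r} k^{2-d}\big)$, uniformly in $i$. Feeding this into the two displays yields the desired lower bound on $\capac(A)$, and with it $\sup_{x\in A} h_A(x)\le D/\capac(A)$ of order $\varphi(r)$.

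The substantive step, and the main obstacle, is the off-diagonal Green's function bound $g(x,y)\lesssim\dist(x,y)^{2-d}$. The on-diagonal hypothesis alone only gives $g(x,y)=\sum_{t\ge\dist(x,y)}p_t(x,y)\le C\sum_{t\ge\dist(x,y)}t^{-d/2}\asymp\dist(x,y)^{1-d/2}$, which is too weak; the missing input is that $p_t(x,y)$ is far below $t^{-d/2}$ when $t$ is well under $\dist(x,y)^2$. I would recover this from the finite propagation speed of the simple random walk via a Carne--Varopoulos bound $p_t(x,y)\le 2\sqrt{D}\,\rho^t e^{-\dist(x,y)^2/(2t)}$ ($\rho$ the spectral radius, $\le 1$ always and $<1$ in the non-amenable case): one uses the Gaussian factor to discard the range $t\ll\dist(x,y)^2$ and the on-diagonal bound for $t\gtrsim\dist(x,y)^2$, the sum being dominated by $t\asymp\dist(x,y)^2$, where $p_t(x,y)\lesssim\dist(x,y)^{-d}$ over $\asymp\dist(x,y)^2$ time steps. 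The only delicate point is that in the borderline ranges $d\le3$ this route costs an extra logarithmic factor unless the matching Gaussian off-diagonal upper bound is available; tracking or absorbing it is the last bit of bookkeeping, after which the elementary summation of the previous paragraph closes the argument.
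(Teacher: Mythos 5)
Your skeleton is exactly the paper's: bound $\sup_{x\in A}h_A(x)\le D/\capac(A)$, extract from the connected set $A$ a path set $Q=\{v_0,\dots,v_r\}$ with $|v_j|=j$, use monotonicity of capacity together with \eqref{eqn:capac}, note $\dist(v_i,v_j)\ge |i-j|$, and sum the Green function along $Q$ to get the three regimes. The one substantive divergence is the source of the off-diagonal Green function bound, and that is where there is a genuine gap. From Carne--Varopoulos plus the on-diagonal hypothesis, the best one can extract from $\sum_t \min\bigl(Ct^{-d/2},\,Ce^{-\dist(x,y)^2/(2t)}\bigr)$ is $g(x,y)\le C\,\dist(x,y)^{2-d}\,(\log \dist(x,y))^{d/2-1}$: the crossover between the two bounds sits at $t\asymp \dist(x,y)^2/\log\dist(x,y)$, not at $t\asymp\dist(x,y)^2$, and the tail $\sum_{t>t_0}t^{-d/2}\asymp t_0^{1-d/2}$ then necessarily carries the factor $(\log)^{d/2-1}$; no choice of splitting removes it. This loss is not mere bookkeeping, and it is not confined to ``borderline'' cases in the way you suggest: for $d>3$ it is indeed absorbed (the series $\sum_k k^{2-d}(\log k)^{d/2-1}$ still converges, so $\varphi(r)=Cr^{-1}$ survives), but for $d=3$ your route yields only $\varphi(r)=C(\log r)^{3/2}r^{-1}$ and for $2<d<3$ only $Cr^{2-d}(\log r)^{d/2-1}$, strictly weaker than the statement. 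The $d=3$ case is precisely what feeds the paper's improved $\sqrt{t\log t}$ bound on $\Z^3$, so the discrepancy would propagate to the main results.

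The missing ingredient, which the paper invokes, is the theorem of Hebisch and Saloff-Coste \cite{HSC}: the on-diagonal bound $\sup_{x,y}p_t(x,y)\le Ct^{-d/2}$ by itself self-improves to the full Gaussian upper bound $p_t(x,y)\le Ct^{-d/2}\exp\bigl(-\dist(x,y)^2/(Ct)\bigr)$ (with a new constant $C$). Summing this over $t$ gives $g(x,y)\le C\,\dist(x,y)^{2-d}$ with no logarithmic correction, after which your extraction of the path set and the summation of $\max(1,|i-j|)^{2-d}$ closes the argument exactly as you wrote it and produces the stated $\varphi$ in all three regimes. So the fix is simply to replace the Carne--Varopoulos step by the citation of \cite{HSC}; the rest of your proof coincides with the paper's.
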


\begin{proof}
A well known result of Hebisch \& Saloff-Coste \cite{HSC} tells us that under the
heat kernel condition we have for all $x,y \in G$ and all $t >0$,
$$ p_t(x,y) \leq C t^{-d/2} \cdot \exp( -  \tfrac{\dist(x,y)^2 }{C t} ) , $$
perhaps with a different constant $C>0$.
Summing this over $t$, we obtain a bound on the Green function.
For any $x \neq y \in G$,
$$ g(x,y) \leq C \dist(x,y)^{2-d} , $$
again with a modified constant $C>0$.

This, in turn, implies a radius Beurling estimate for $G$.
Indeed, for any connected set $A$ containing $o$,
if $r = \rad(A)$ then there is a sequence of vertices $o=v_0, v_1, \ldots, v_r$
such that $|v_j|=j$ and $Q : = \{ v_0 , \ldots, v_r \} \subset A$.
Since capacity is monotone, for any $x \in A$,
$$ h_A(x) = \frac{ e_A(x)}{\capac(A) } \leq \frac{D}{ \capac(Q) } , $$
where $D:=\sup_y \deg(y)$.

Now, for any $v_j,v_i \in Q$,
\begin{align*}
g(v_j,v_i) & \leq C \dist(v_j,v_i)^{2-d} \leq C \big| j-i \big|^{2-d} .
\end{align*}
So for $v_i \in Q$, by perhaps modifying the constant $C>0$,
\begin{align*}
\sum_{j=0}^r g(v_i, v_j) & \leq g(v_i,v_i) + C \sum_{j=0}^{i-1} |i-j|^{2-d} + C \sum_{j=i+1}^r |i-j|^{2-d}
\\
& \leq C + C \sum_{j=1}^{i} j^{2-d} + C \sum_{j=1}^{r-i} j^{2-d} .
\end{align*}
Thus,
$$ \sup_{v \in Q} \sum_{u \in Q} g(v,u) \leq
\begin{cases}
\tfrac{C}{3-d} r^{3-d} & \textrm{ if } 2<d<3 , \\
C \log r & \textrm{ if } d = 3 , \\
C & \textrm{ if } d > 3 .
\end{cases}
$$
This enables us to bound the capacity of $Q$ (using \eqref{eqn:capac}):
$$ \capac(Q)  \geq \frac{|Q|}{ \sup_{v \in Q} \sum_{u \in Q} g(v,u) }
\geq
\begin{cases}
c(3-d)  r^{d-2}  & \textrm{ if } 2<d<3 , \\
c \tfrac{r}{\log r} & \textrm{ if } d = 3 , \\
c r & \textrm{ if } d > 3 .
\end{cases}
$$
The proposition follows readily.
\end{proof}

\begin{cor} \label{cor:d dim HK}
Let $G$ be a bounded degree graph rooted at $o$.
Assume that the heat kernel on $G$ satisfies
$$ \sup_{x,y} p_t(x,y) \leq C t^{-d/2}  $$
for some $d>2, C>0$ and all $t > 0$.

Let $(A_t)_t$ be DLA on $G$ started at $A_0 = \{o\}$.
Then, a.s.\
$$ \limsup_{t \to \infty} t^{-\beta} \cdot \rad(A_t) < \infty , $$
where
$$ \beta =
\begin{cases}
\tfrac{1}{d-1}  & \textrm{ if } 2<d<3 , \\
\tfrac12 & \textrm{ if } d > 3 , \\
\end{cases}
$$

When $d=3$ we have that a.s.\
$$ \limsup_{t \to \infty} (t \log t)^{-1/2} \cdot \rad(A_t) < \infty . $$
\end{cor}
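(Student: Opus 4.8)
The plan is to combine the radius Beurling estimates of Proposition~\ref{prop:radius Beurling from HK} with the DLA growth bounds of Section~2; in each of the three regimes Proposition~\ref{prop:radius Beurling from HK} hands us an explicit $\vphi$, and it only remains to convert the decay rate of $\vphi$ into a bound on $\rad(A_t)$.

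In the two ``generic'' regimes there is essentially nothing left to do. For $2<d<3$ we have $\vphi(r)=\tfrac{C}{3-d}r^{2-d}$, which is of the form $\vphi(r)=C'r^{-\alpha}$ with $\alpha=d-2\in(0,1)$, so Theorem~\ref{thm:kesten bound radius} gives $\limsup_{t\to\infty}t^{-1/(1+\alpha)}\rad(A_t)<\infty$ a.s.\ with $\tfrac1{1+\alpha}=\tfrac1{d-1}$. For $d>3$ we have $\vphi(r)=Cr^{-1}$, i.e.\ $\alpha=1$, and the same theorem yields exponent $\tfrac1{1+\alpha}=\tfrac12$; one only needs to note that the proof of Theorem~\ref{thm:kesten bound radius} uses $\alpha$ solely through the relation $t=c\,\rad(A_s)^{1+\alpha}$, which is valid for every $\alpha>0$, so the endpoint $\alpha=1$ is allowed.

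The single case not covered verbatim by Theorem~\ref{thm:kesten bound radius} is $d=3$, where $\vphi(r)=C\log r\cdot r^{-1}$ is not a pure power. Here I would rerun the argument of Theorem~\ref{thm:kesten bound radius} directly from Lemma~\ref{lem:kesten radius}. Fixing $s$ and taking $n=\rad(A_s)$, $t=\lfloor \tfrac{n}{D^2e^2\vphi(\rad(A_s))}\rfloor\asymp \rad(A_s)^2/\log\rad(A_s)$, one gets $\tfrac{De}{n}\vphi(\rad(A_s))\,t\le\tfrac1{De}$, and then, using $|A_s|\le D^{\rad(A_s)}$ exactly as in~\eqref{eqn:radius}, Lemma~\ref{lem:kesten radius} gives $\Pr[\rad(A_{s+t})\ge 2\rad(A_s)\mid A_s]\le e^{-\rad(A_s)}$. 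With the stopping times $\tau_r=\inf\{s:\rad(A_s)\ge r\}$ this reads $\Pr[\tau_{2r}\le\tau_r+c\,r^2/\log r]\le e^{-r}$, which is summable in $r$; Borel--Cantelli along $r=2^k$ then produces a random $K>0$ with $\tau_{2^{k+1}}\ge\tau_{2^k}+K\,4^k/k$ for all $k$, and summing the geometric series gives $\tau_r\gtrsim r^2/\log r$ for all dyadic $r$. Finally, for any $s$ with $\tau_r\le s<\tau_{2r}$ we have $r\le\rad(A_s)<2r$ and $s\ge\tau_r\gtrsim r^2/\log r\gtrsim\rad(A_s)^2/\log\rad(A_s)$; combined with the trivial bound $\rad(A_s)\le s$ (hence $\log\rad(A_s)\le\log s$) this gives $\rad(A_s)^2\lesssim s\log s$, i.e.\ $\limsup_{s\to\infty}(s\log s)^{-1/2}\rad(A_s)<\infty$ a.s.

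I do not anticipate any real obstacle, since all the substance is already in Proposition~\ref{prop:radius Beurling from HK} and the growth theorems of Section~2. The two points deserving a moment's care are the endpoint $\alpha=1$ in the $d>3$ case, and, in the $d=3$ case, making sure the logarithm in $\vphi$ propagates to a single factor of $\log s$ under $\rad(A_t)$ rather than a power of a logarithm --- which it does, because in the dyadic sum $\sum_{j\le k}4^j/j$ the last term dominates, so no extra logarithmic loss is incurred.
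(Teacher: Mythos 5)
Your proposal is correct and follows essentially the same route as the paper: the $d\neq 3$ cases by plugging Proposition~\ref{prop:radius Beurling from HK} into Theorem~\ref{thm:kesten bound radius} (whose hypothesis already allows any $\alpha>0$, so the $\alpha=1$ endpoint needs no extra care), and the $d=3$ case by rerunning that argument from Lemma~\ref{lem:kesten radius} with $\vphi(r)=C\log r\cdot r^{-1}$ to get $\Pr[\tau_{2r}\le\tau_r+cr^2(\log r)^{-1}]\le e^{-r}$ and then Borel--Cantelli. Your dyadic summation merely spells out the translation step the paper leaves implicit.
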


\begin{proof}
The $d \neq 3$ cases follow immediately from
combining Proposition \ref{prop:radius Beurling from HK}
with Theorem \ref{thm:kesten bound radius}.

For $d=3$ we have by Proposition \ref{prop:radius Beurling from HK}
that $G$ satisfies a $\vphi$-radius Beurling estimate with $\vphi(r) = C \log r \cdot r^{-1}$.

Choosing $n = \rad(A_s)$ and $t = \lfloor \tfrac{n}{D^2 e^2 \vphi(\rad(A_s) ) } \rfloor$
in Lemma \ref{lem:kesten radius} we obtain for any $r$ that
$$ \Pr [ \tau_{2r} \leq \tau_r + c r^2 (\log r)^{-1} ] \leq e^{-r} , $$
where as before $\tau_r = \inf \{ t \ : \ \rad(A_t) \geq r \}$.
By Borel-Cantelli this implies that a.s.\  there exists $K>0$ such that
for all $r$ we have $\tau_{2r} > \tau_r + K r^2 (\log r )^{-1}$.
This can be translated to say that a.s.\
$$ \limsup_{t \to \infty} (t \log t)^{-1/2} \cdot \rad(A_t) < \infty . $$
\end{proof}

\subsection{Volume Beurling estimates}

\begin{prop} \label{prop:general volume Beurling}
Let $G$ be a bounded degree graph rooted at $o$.
Assume that
$$ \sup_{x,y} p_t(x,y) \leq C t^{-d/2} $$
for some $d>4, C>0$ and all $t>0$.

Then, there exists a constant $C_d>0$ such that for any $\eps>0$,
$G$ satisfies a $\vphi$-volume Beurling estimate with
$$ \vphi(s) = \tfrac{C_d}{\eps} \cdot s^{-1 + \tfrac{2}{d-2} + \eps} . $$
\end{prop}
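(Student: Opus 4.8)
The plan is to mimic the argument of Proposition~\ref{prop:radius Beurling from HK}, but now bounding the capacity of a \emph{volume-like} set rather than a one-dimensional path. As in that proof, the Hebisch--Saloff-Coste estimate \cite{HSC} upgrades the on-diagonal bound $\sup_{x,y}p_t(x,y)\le Ct^{-d/2}$ to the Gaussian bound $p_t(x,y)\le Ct^{-d/2}\exp(-\dist(x,y)^2/(Ct))$, and summing over $t$ yields $g(x,y)\le C\dist(x,y)^{2-d}$ for $x\ne y$, with $g(x,x)\le C$ since degrees are bounded. The key difference: given a connected finite set $o\in A\subset G$ with $|A|=s$, we do not have a radius lower bound to exploit, only a volume one. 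By monotonicity of capacity, $h_A(x)=e_A(x)/\capac(A)\le D/\capac(A)$ for every $x\in A$, so it suffices to show $\capac(A)\ge c_d\,\eps\,s^{1-\frac{2}{d-2}-\eps}$.

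To bound $\capac(A)$ from below via \eqref{eqn:capac}, I need an upper bound on $\sup_{v\in A}\sum_{u\in A}g(v,u)$. Fix $v\in A$ and split the sum by distance: write $A_k=\{u\in A:\dist(v,u)=k\}$, so that
\begin{align*}
\sum_{u\in A}g(v,u)\le C+C\sum_{k\ge 1}|A_k|\,k^{2-d}.
\end{align*}
The constraint is $\sum_k|A_k|\le s$. The sum $\sum_k |A_k|k^{2-d}$ is maximized, subject to this constraint, by putting all the mass at small $k$; but in a bounded-degree graph $|A_k|\le D^k$, and more usefully the number of vertices within distance $R$ of $v$ is at most some $V(R)$. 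Actually the clean way: since $k\mapsto k^{2-d}$ is decreasing, for any radius $R$ we have $\sum_{k\ge 1}|A_k|k^{2-d}\le \sum_{k=1}^{R}|A_k|k^{2-d}+R^{2-d}\sum_{k>R}|A_k|\le (\text{ball volume up to }R)\cdot 1 \cdot(\text{worst weighting}) + s\,R^{2-d}$. Here the heat-kernel bound also controls volume growth: $p_t(x,y)\le Ct^{-d/2}$ forces $|B(v,R)|\ge c R^{2}$-type lower bounds and, combined with $g\le C\dist^{2-d}$ summed over a ball, an upper bound on how concentrated $A$ can be. The honest route is a rearrangement/optimization: among all profiles $(|A_k|)_k$ with $\sum|A_k|=s$ and $|A_k|\le$ (max possible, which grows at most polynomially, say $\le C'k^{d-1}$ in the relevant regime, or one simply uses $|A_k|\le D^k$ and truncates), choose $R$ to balance the two pieces. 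Taking $R\asymp s^{1/(d-2)}$ makes $s R^{2-d}\asymp s^{1-\frac{2}{d-2}+\frac{2}{d-2}\cdot\frac{?}{}}$---one tunes this, and the $\eps$ and the $1/\eps$ prefactor appear exactly because the extremal profile $|A_k|\propto k^{-1}$-type tail gives a $\log$ that is absorbed into $s^{\eps}/\eps$. Concretely, $\sup_{v\in A}\sum_{u\in A}g(v,u)\le \tfrac{C_d}{\eps}\,s^{\frac{2}{d-2}+\eps}$, and then \eqref{eqn:capac} gives $\capac(A)\ge \tfrac{|A|}{\sup_v\sum_u g(v,u)}\ge \tfrac{c_d\eps}{C_d}\,s^{1-\frac{2}{d-2}-\eps}$, which is the claimed $\vphi$.

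The main obstacle is the combinatorial/geometric optimization step: bounding $\sup_{v\in A}\sum_{u\in A}g(v,u)$ over all connected sets $A$ of a given size, using only $g(v,u)\le C\dist(v,u)^{2-d}$ together with whatever volume-growth information the heat-kernel upper bound provides. One has to be careful that $d>4$ is used (it guarantees $2/(d-2)<1$, so the exponent $1-\frac{2}{d-2}+\eps$ is genuinely sub-linear and the estimate is nontrivial for small $\eps$), and that the extremal configuration---$A$ shaped like a ball of radius $\asymp s^{1/(d-2)}$, or a thin tentacle---is handled; a worst-case bound should come from the decreasing rearrangement of the distance profile subject to the bounded-degree (hence at most exponential, but effectively one only needs at most polynomial near the optimizer) volume constraint. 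Once that Green-sum bound is in hand, everything else is a one-line application of \eqref{eqn:capac} and capacity monotonicity.
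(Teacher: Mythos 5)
Your final step (capacity via \eqref{eqn:capac} and $h_A(x)\le D/\capac(A)$) matches the paper, but the heart of your argument has a genuine gap, and as proposed it cannot be closed. You route everything through the off-diagonal bound $g(x,y)\le C\dist(x,y)^{2-d}$ and a shell decomposition $A_k=\{u\in A:\dist(v,u)=k\}$, and you yourself defer the crucial estimate $\sup_{v\in A}\sum_{u\in A}g(v,u)\le \tfrac{C_d}{\eps}s^{\frac{2}{d-2}+\eps}$ to an unspecified ``rearrangement/optimization'' step. That step fails in general: the hypothesis $\sup_{x,y}p_t(x,y)\le Ct^{-d/2}$ gives volume \emph{lower} bounds, not upper bounds, so your claim that $|A_k|\lesssim k^{d-1}$ ``in the relevant regime'' is unjustified. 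For instance, any bounded-degree non-amenable graph (say the $3$-regular tree) satisfies the heat-kernel hypothesis for every $d$, the Hebisch--Saloff-Coste bound $g(x,y)\le C\dist(x,y)^{2-d}$ holds there, and yet for $A=B(o,R)$ with $s=|A|\asymp 2^R$ your shell sum is only bounded by $\sum_{k\le R}2^k k^{2-d}\asymp s(\log s)^{2-d}$, far larger than $s^{\frac{2}{d-2}+\eps}$. So no optimization over profiles $(|A_k|)_k$ constrained only by $\sum_k|A_k|=s$ and bounded degree can deliver the needed Green-sum bound; the distance-based Green estimate is simply too lossy when balls are large.

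The paper avoids distances altogether and sums the heat kernel in time: $\sum_{y\in A}g(x,y)=\sum_t\Pr_x[X_t\in A]$, with $\Pr_x[X_t\in A]\le\min\{1,\,C|A|t^{-d/2}\}$. Splitting at the threshold where $C|A|t^{-d/2}$ drops below $t^{-1-\eps}$ (i.e.\ $t\le (C|A|)^{\alpha}$ with $\alpha=\tfrac{2}{d-2-2\eps}$) gives $\sum_{y\in A}g(x,y)\le (C|A|)^{\alpha}+\sum_{t>(C|A|)^{\alpha}}t^{-1-\eps}\le \eps^{-1}C_d|A|^{\alpha}$, and since $d>4$ one has $\alpha\le\tfrac{2}{d-2}+\eps$; this is exactly where the $\eps$ in the exponent and the $\eps^{-1}$ prefactor come from, with no geometric input about $A$ beyond its cardinality. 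If you want to salvage your write-up, replace the shell/rearrangement step by this time-decomposition (your Gaussian upgrade via \cite{HSC} is then not needed at all); the concluding capacity argument you give can stay as is.
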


\begin{proof}
For any finite set $A$ containing $x$ we get that
\begin{align*}
\Pr_x [ X_t \in A ] & = \sum_{y \in A} p_t(x,y) \leq C t^{-d/2} \cdot |A| .
\end{align*}

Let $0< \eps < \tfrac{d}{2} -2$.
If $\Pr_x [ X_t \in A ] \geq t^{-1-\eps}$ then
we get that
$t^{- 1- \eps + d/2} \leq C |A|$, which implies that
\begin{align*}
\sum_{y \in A} g(x,y)
& = \sum_t \Pr_x [ X_t \in A ]
\leq (C |A|)^{\alpha} + \sum_{t > (C|A|)^{\alpha} } t^{-1-\eps}
\leq \eps^{-1} C_{d} |A|^\alpha ,
\end{align*}
where $\alpha = \tfrac{2}{d-2 - 2\eps}$, and
$C_{d} > 0$ is some constant depending on $d$.
Thus,
$$ \capac(A) \geq \frac{ |A| }{ \sup_{x \in A} \sum_{y \in A} g(x,y) }
\geq \eps (C_{d})^{-1} \cdot |A|^{1-\alpha} . $$
The volume Beurling estimate follows from
$h_A(x) = \frac{e_A(x)}{\capac(A) } \leq \frac{D}{\capac(A)}$, and from the fact that
$\alpha = \tfrac{2}{d-2-2\eps} \leq \tfrac{2}{d-2} + \eps$ (because $d>4$).
\end{proof}

\begin{rem}
While this paper was being prepared,
a recent paper \cite{LPS17} was uploaded,
where it is shown (in Theorem 1.2) that in a Cayley graph $G$,
the expected number of time spent by the random walk in $B(o,r)$, a ball of radius $r$,
is bounded by $O(r^2 \log |B(o,r)|) = O(r^3)$.
Thus, under a heat kernel decay as in Proposition \ref{prop:general volume Beurling},
with the same proof
we would obtain a $\vphi$-volume Beurling bound with $\vphi(s) = C s^{-1 + \tfrac3d}$.

We conjecture, as in \cite{LPS17}, that the expected time spent in $B(o,r)$ should be $O(r^2)$
for any Cayley graph.  (This is an interesting open question in its own right.)
This would give a volume Beurling estimate of $O(s^{-1+ \tfrac2d})$.

Since we can obtain the ``correct'' bound in the polynomial growth case
(see the proof of Theorem \ref{thm:poly-growth} below),
and since this does not significantly impact our results for super-polynomial growth groups,
we do not elaborate on these bounds.
\end{rem}

\begin{cor}
\label{cor:volume Beurling}
Let $G$ be a bounded degree graph rooted at $o$.
Assume that
$$ \sup_{x,y} p_t(x,y) \leq C t^{-d/2} $$
for some $d>4, C>0$ and all $t>0$.

Then, there exists a constant $C_d>0$ such that
$G$ satisfies a $\vphi$-volume Beurling estimate with
$$ \vphi(s) = C_d \cdot (\log s) \cdot s^{-1 + \tfrac{2}{d-2} } . $$
\end{cor}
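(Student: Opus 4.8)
The plan is to optimize the $\eps$ in Proposition \ref{prop:general volume Beurling} as a function of $s$, trading the $1/\eps$ prefactor against the $s^{\eps}$ loss in the exponent. Concretely, Proposition \ref{prop:general volume Beurling} gives, for every $\eps > 0$, a $\vphi_\eps$-volume Beurling estimate with $\vphi_\eps(s) = \tfrac{C_d}{\eps}\, s^{-1 + \frac{2}{d-2} + \eps}$. Since a single graph $G$ satisfies \emph{all} of these simultaneously, for each fixed $s$ we may take the infimum over $\eps$ of the right-hand side; that is, $G$ satisfies a $\vphi$-volume Beurling estimate with $\vphi(s) = \inf_{\eps > 0} \tfrac{C_d}{\eps}\, s^{-1 + \frac{2}{d-2}}\, s^{\eps}$. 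Writing $s^\eps = e^{\eps \log s}$, the quantity to minimize in $\eps$ is $\tfrac{1}{\eps} e^{\eps \log s}$, whose minimum over $\eps > 0$ is attained at $\eps = 1/\log s$, giving value $e \log s$. Substituting back yields $\vphi(s) \le C_d\, e \cdot (\log s)\, s^{-1 + \frac{2}{d-2}}$, which is exactly the claimed bound (after absorbing the factor $e$ into the constant $C_d$).

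More carefully, I would first note that the statement is only meaningful for $s$ large enough that $\log s > 0$ and that the optimal $\eps = 1/\log s$ lies in the admissible range $(0, \tfrac d2 - 2)$ required by the proof of Proposition \ref{prop:general volume Beurling}; for all but finitely many $s$ this holds, and the finitely many small values of $s$ can be handled by enlarging $C_d$ (recall $\vphi$ takes values in $[0,1]$, and for bounded $s$ one can also just use the trivial bound $h_A(x) \le 1$, or note that a connected set containing $o$ with small $|A|$ has bounded radius so its capacity is bounded below). Then the main computation is the one-variable optimization above: set $\psi(\eps) = \tfrac{1}{\eps} e^{\eps L}$ with $L = \log s$, differentiate to get $\psi'(\eps) = e^{\eps L}\big(\tfrac{L}{\eps} - \tfrac{1}{\eps^2}\big)$, which vanishes at $\eps = 1/L$, and check this is a minimum with $\psi(1/L) = L e$.

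There is essentially no obstacle here: this is a routine optimization over the family of estimates already proved, and the only thing to be slightly careful about is the admissible range of $\eps$ and the finitely many exceptional small $s$, both of which are absorbed into the constant. The one conceptual point worth stating explicitly is that it is legitimate to let $\eps$ depend on $s$: the Beurling estimate $\sup_{x \in A} h_A(x) \le \vphi_\eps(|A|)$ holds for \emph{every} connected finite $A \ni o$ and \emph{every} $\eps$, so for a set $A$ with $|A| = s$ we are free to apply the version of the estimate with $\eps = \eps(s) = 1/\log s$, and this produces the single function $\vphi(s) = C_d (\log s) s^{-1 + \frac{2}{d-2}}$ valid uniformly. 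Thus the corollary follows immediately from Proposition \ref{prop:general volume Beurling} together with this optimization, and no new probabilistic input is needed.
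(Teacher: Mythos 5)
Your proposal is correct and follows essentially the same route as the paper: the paper's proof simply applies Proposition \ref{prop:general volume Beurling} with $\eps = (\log s)^{-1}$, yielding $h_A(x) \leq C_d \cdot e \log s \cdot s^{-1+\tfrac{2}{d-2}}$, exactly your optimization. Your extra care about the admissible range of $\eps$ and small values of $s$ is a harmless refinement of the same argument.
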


\begin{proof}
By Proposition \ref{prop:general volume Beurling}, if $|A| = s$ then
$h_A(x) \leq \eps^{-1} s^{\eps} \cdot C_d \cdot s^{-1+ \tfrac{2}{d-2} }$
for all $x$.
Taking $\eps = (\log s)^{-1}$ we get that
$$ h_A(x) \leq C_d \cdot e \log s \cdot s^{-1 + \tfrac{2}{d-2} } .$$
\end{proof}

\begin{cor}
\label{cor:high dim HK volume}
Let $G$ be a bounded degree graph rooted at $o$.
Assume that the heat kernel on $G$ satisfies
$$ \sup_{x,y} p_t(x,y) \leq C t^{-d/2}  $$
for some $d>4, C>0$ and all $t > 0$.

Let $(A_t)_t$ be DLA on $G$ started at $A_0 = \{o\}$.
Then, a.s.\ for any $\eps>0$,
$$ \limsup_{t \to \infty} (\log t)^{-1} \cdot t^{-\tfrac{2}{d-2}} \cdot \rad(A_t) = 0 . $$
\end{cor}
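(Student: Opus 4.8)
The plan is to feed the heat-kernel--derived volume Beurling estimate of Proposition~\ref{prop:general volume Beurling} into the growth bound of Theorem~\ref{thm:kesten bound volume}, once for each of a sequence of parameters $\eps$ tending to $0$, and then intersect the resulting almost-sure events. Concretely I would establish that almost surely, for every $\eps>0$, $\rad(A_t)\le M_\eps\,t^{2/(d-2)+\eps}$ for all large $t$ with $M_\eps<\infty$ random; this gives $\rad(A_t)=t^{2/(d-2)+o(1)}$, hence the asserted vanishing once the stated $\eps$ of room in the exponent is used.

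First, fix $\eps>0$ with $\eps<\min\{\tfrac{d}{2}-2,\ \tfrac{d-4}{d-2}\}$, both of which are positive since $d>4$. By Proposition~\ref{prop:general volume Beurling}, $G$ has a $\vphi_\eps$-volume Beurling estimate with $\vphi_\eps(s)=\tfrac{C_d}{\eps}\,s^{-\alpha_\eps}$, where $\alpha_\eps:=1-\tfrac{2}{d-2}-\eps\in(0,1)$ by the choice of $\eps$. This is exactly the hypothesis of case~(1) of Theorem~\ref{thm:kesten bound volume}, with constant $C=C_d/\eps$, exponent $\alpha=\alpha_\eps$, and $\beta=0$, so that theorem yields, almost surely,
\[
\limsup_{t\to\infty}\, t^{\alpha_\eps-1}\,\rad(A_t)=\limsup_{t\to\infty}\, t^{-\frac{2}{d-2}-\eps}\,\rad(A_t)<\infty ,
\]
i.e.\ almost surely $\rad(A_t)\le M_\eps\,t^{2/(d-2)+\eps}$ for all large $t$.

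Next I would apply this along $\eps=\eps_k:=1/k$ for all $k$ large enough that the constraint on $\eps_k$ holds, and intersect these countably many full-measure events. On the resulting full-measure event, for every admissible $k$ one has $\rad(A_t)\le M_{\eps_k}\,t^{2/(d-2)+\eps_k}$ eventually; no uniformity issue arises since this bound is only weaker for larger $\eps_k$. Given any $\eps>0$, pick $k$ with $\eps_k<\eps$; then for all large $t$,
\[
(\log t)^{-1}\,t^{-\frac{2}{d-2}-\eps}\,\rad(A_t)\ \le\ M_{\eps_k}\,(\log t)^{-1}\,t^{\eps_k-\eps}\ \longrightarrow\ 0 ,
\]
which is the claimed limit. (If a deterministic exponent is preferred, choosing the free parameter at the scale-dependent value $\eps\asymp(\log t)^{-1}$ --- equivalently, plugging the $\vphi$ of Corollary~\ref{cor:volume Beurling} directly into Theorem~\ref{thm:kesten bound volume} --- gives instead $\rad(A_t)=O\big((\log t)\,t^{2/(d-2)}\big)$ a.s.)

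I do not expect a genuine analytic obstacle: all of the substantive work --- passing from the heat-kernel bound to a capacity and harmonic-measure bound, and from there to a growth bound for DLA --- is already carried out in Proposition~\ref{prop:general volume Beurling} and Theorem~\ref{thm:kesten bound volume}. The only point needing care is bookkeeping: checking that for all sufficiently small $\eps$ the hypotheses of both results are in force ($0<\eps<\tfrac{d}{2}-2$ for the Beurling estimate, $\alpha_\eps\in(0,1)$ for case~(1) of the growth bound), and checking that for each fixed $\eps$ the Borel--Cantelli sum inside the proof of Theorem~\ref{thm:kesten bound volume} converges --- it does, since $t^{2/(d-2)+\eps}$ dominates $\log t$ --- so that a single full-measure event serves all the $\eps_k$ simultaneously.
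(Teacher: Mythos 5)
Your main argument is sound as far as it goes, but it proves a strictly weaker bound than the one displayed in the corollary, and the step that closes the gap is exactly the paper's proof, which you relegate to a parenthesis. Applying Proposition~\ref{prop:general volume Beurling} with a fixed $\eps$ and case (1) of Theorem~\ref{thm:kesten bound volume} with $\beta=0$, then intersecting over $\eps_k=1/k$, gives a.s.\ $\rad(A_t)\le M_{\eps}\,t^{2/(d-2)+\eps}$ for every $\eps>0$. This controls $(\log t)^{-1}t^{-2/(d-2)-\eps}\rad(A_t)$, but not the quantity actually displayed, $(\log t)^{-1}t^{-2/(d-2)}\rad(A_t)$: for instance $\rad(A_t)\asymp t^{2/(d-2)}(\log t)^2$ is consistent with all of your bounds yet makes the displayed limsup infinite. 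Fixing $\eps$ and only intersecting at the end loses a logarithm-versus-power factor that cannot be recovered afterwards. The paper instead optimizes $\eps$ at a scale-dependent value \emph{inside the Beurling estimate}: Corollary~\ref{cor:volume Beurling} (i.e.\ $\eps=(\log s)^{-1}$ in Proposition~\ref{prop:general volume Beurling}) gives the single estimate $\vphi(s)=C_d(\log s)\,s^{-1+2/(d-2)}$, and feeding this into Theorem~\ref{thm:kesten bound volume} with $\alpha=1-\tfrac{2}{d-2}$ and $\beta=1$ yields directly $\limsup_{t\to\infty}(\log t)^{-1}t^{-2/(d-2)}\rad(A_t)<\infty$ a.s., which is the bound recorded in Theorem~\ref{thm:main} (the entry $f(t)=t^{2/(d-2)}\log t$). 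So your concluding parenthetical remark is not an optional variant; it is the proof, and should replace the $\eps_k$-intersection argument.

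A separate caveat concerns the statement rather than your argument: as printed, the corollary asserts the limsup equals $0$ and quantifies over an $\eps$ that never appears in the formula. The paper's own one-line proof (and likewise Corollary~\ref{cor:isodim} and the table in Theorem~\ref{thm:main}) only delivers ``$<\infty$'' for the normalization $(\log t)^{-1}t^{-2/(d-2)}$, and neither your argument nor the paper's establishes the literal ``$=0$'' claim with that normalization; the statement appears to be mis-typed, with the intended conclusion being the ``$<\infty$'' bound above. Under your reading, with the extra $t^{-\eps}$ inserted in the exponent, the limit $0$ does hold, but that version is strictly weaker than what the paper proves.
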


\begin{proof}
This is an immediate consequence of
using Corollary \ref{cor:volume Beurling}
to take $\alpha = 1 - \tfrac{2}{d-2}, \beta =1$  in
Theorem \ref{thm:kesten bound volume}.
\end{proof}

\section{Isoperimetric dimension}

For a graph $G$ the \define{isoperimetric dimension} of $G$
is defined as:
$$ \isodim(G) = \inf \set{ d \geq 1  \ | \
\exists \ c>0 \ , \
\forall \ \textrm{ finite } A \subset G \ , \ |\p A | \geq c |A|^{(d-1)/d} } . $$
Here $\p A = \{ x \not\in A  \ : \ \dist(x,A) = 1 \}$.
For example, $\isodim(\Z^d) = d$.

\begin{cor} \label{cor:isodim}
Let $G$ be a bounded degree graph rooted at $o$.
Suppose that $\isodim(G) \geq d$.

Let $(A_t)_t$ be DLA on $G$ started at $A_0 = \{o\}$.
Then, a.s.\
$$ \limsup_{t \to \infty} t^{-\beta} \cdot \rad(A_t) < \infty , $$
where
$$ \beta =
\begin{cases}
\tfrac{1}{d-1}  & \textrm{ if } 2<d<3 , \\
\tfrac12 & \textrm{ if } 3 < d \leq 4 .
\end{cases}
$$
When $d=3$ we have that a.s.\
$$ \limsup_{t \to \infty} (t \log t)^{-1/2} \cdot \rad(A_t) < \infty . $$
For $d > 4$ we have that a.s.\
$$ \limsup_{t \to \infty} (\log t)^{-1} \cdot t^{- \tfrac{2}{d-2}  } \cdot \rad(A_t) < \infty . $$
\end{cor}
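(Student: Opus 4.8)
The plan is to reduce Corollary \ref{cor:isodim} to the heat-kernel results already proved, namely Corollary \ref{cor:d dim HK} and Corollary \ref{cor:high dim HK volume}, by showing that the isoperimetric hypothesis $\isodim(G) \geq d$ forces the on-diagonal heat kernel bound $\sup_{x,y} p_t(x,y) \leq C t^{-d'/2}$ for every $d' < d$. This is the classical Faber--Krahn / Nash-type implication: an isoperimetric inequality $|\p A| \geq c|A|^{(d-1)/d}$ on a bounded-degree graph yields (via Cheeger-type and then Nash-type arguments, as in Coulhon--Saloff-Coste or Woess) a uniform decay $p_t(x,y) = O(t^{-d/2})$. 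In fact, since our downstream statements only ever use the exponent $d$ through $t^{-d/2}$, and since an arbitrarily small loss in the exponent is harmless for all the stated conclusions (each of $\tfrac1{d-1}$, $\tfrac12$, $\tfrac2{d-2}$ is continuous in $d$, and the $d=3$ borderline is handled separately below), I would phrase the reduction with $d$ replaced by $d-\eta$ for arbitrarily small $\eta>0$ and then let $\eta \to 0$.

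The steps, in order, are: \textbf{(1)} From $\isodim(G)\geq d$, fix any $d' \in (2,d)$ (and in the case $d=3$, I will keep $d'=3$ exactly --- see below). Invoke the standard discrete Faber--Krahn inequality implied by the isoperimetric inequality, and then the Nash--Moser iteration (e.g.\ Coulhon's theorem, or Proposition in Woess's book on random walks) to conclude $\sup_{x,y} p_t(x,y) \leq C' t^{-d'/2}$ for all $t>0$, with $C'$ depending on $d'$, the degree bound, and the isoperimetric constant. \textbf{(2)} For $2 < d \leq 4$: apply Corollary \ref{cor:d dim HK} with the exponent $d'$ to get $\limsup_t t^{-\beta'} \rad(A_t) < \infty$ a.s.\ where $\beta' = \tfrac1{d'-1}$ for $2<d'<3$ and $\beta'=\tfrac12$ for $3<d'\leq 4$. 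Now send $d' \uparrow d$: since the event $\{\limsup_t t^{-\beta}\rad(A_t)<\infty\}$ for the target $\beta$ is implied by the corresponding event for any slightly larger exponent $\beta' > \beta$, and a countable intersection over $d' = d - 1/k$ suffices, we obtain the claimed bound with the target $\beta$. \textbf{(3)} For $d=3$: here the borderline matters, so I instead verify directly that $\isodim(G)\geq 3$ gives $\sup_{x,y}p_t(x,y) \leq C t^{-3/2}$ on the nose (the Faber--Krahn / Nash argument does deliver the endpoint exponent), and then invoke the $d=3$ clause of Corollary \ref{cor:d dim HK} verbatim to get $\limsup_t (t\log t)^{-1/2}\rad(A_t)<\infty$ a.s. \textbf{(4)} For $d>4$: same as step (1) to get $\sup_{x,y} p_t(x,y)\leq C t^{-d'/2}$ for any $d'<d$, then apply Corollary \ref{cor:high dim HK volume} with exponent $d'$ to get $\limsup_t (\log t)^{-1}t^{-2/(d'-2)}\rad(A_t) < \infty$ a.s.; let $d'\uparrow d$ through a countable sequence and use that $\tfrac2{d'-2} \downarrow \tfrac2{d-2}$ and that $(\log t)^{-1}$ is common to all, so the target bound follows.

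The main obstacle --- or rather the only real content beyond bookkeeping --- is Step (1): correctly citing and applying the isoperimetry $\Rightarrow$ heat-kernel-decay implication in the discrete, bounded-degree, non-unimodular-allowed setting, including getting the \emph{endpoint} exponent $t^{-d/2}$ (not just $t^{-(d-\eta)/2}$) when $d=3$. This requires the Faber--Krahn inequality $\lambda_1(A) \geq c\,|A|^{-2/d}$ (eigenvalue of the Dirichlet Laplacian on finite $A$), which follows from the isoperimetric inequality by the discrete co-area/Cheeger inequality, and then the Nash inequality $\|f\|_2^{2+4/d} \leq C \|f\|_1^{4/d}\,\langle \Delta f, f\rangle$, which Coulhon--Grigor'yan-type equivalences turn into the stated uniform kernel bound with the sharp exponent. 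Once Step (1) is in place, Steps (2)--(4) are immediate invocations of already-proven corollaries plus the trivial monotonicity remark that a $\limsup$ finiteness statement at exponent $\beta'$ implies it at any $\beta \geq \beta'$, intersected over a countable sequence $d' \uparrow d$.
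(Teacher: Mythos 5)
Your overall reduction is the same as the paper's: deduce a uniform heat-kernel bound $\sup_{x,y}p_t(x,y)\le C t^{-d/2}$ from $\isodim(G)\ge d$, and then quote Corollary \ref{cor:d dim HK} (for $d\le 4$) and Corollary \ref{cor:high dim HK volume} (for $d>4$). The paper obtains the heat-kernel bound from the evolving-set method of Morris--Peres rather than from a Faber--Krahn/Nash argument, but these are equivalent standard routes from the isoperimetric inequality $|\p A|\ge c|A|^{(d-1)/d}$ to on-diagonal decay, so that difference is immaterial.

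However, your $\eta$-loss fallback in steps (2) and (4) has a genuine direction error. From $d'<d$ you obtain, say for $2<d<3$, the a.s.\ event $\{\limsup_t t^{-\beta'}\rad(A_t)<\infty\}$ with $\beta'=\tfrac1{d'-1}>\beta=\tfrac1{d-1}$, i.e.\ only the weaker bound $\rad(A_t)\le C' t^{\beta'}$. Finiteness of the $\limsup$ at the larger exponent $\beta'$ does \emph{not} imply finiteness at the target $\beta$ --- your own monotonicity remark goes the other way (finiteness at $\beta'$ gives it only for exponents $\ge\beta'$) --- and intersecting over a countable sequence $d'_k\uparrow d$ does not rescue this, since the random constants $C'_k$ may diverge; the intersection only yields $\limsup_t t^{-\beta-\eps}\rad(A_t)<\infty$ for every $\eps>0$, which is strictly weaker than the statement of the corollary. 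The same issue occurs for $d>4$, where $\tfrac2{d'-2}>\tfrac2{d-2}$; only the range $3<d\le4$ is unaffected, since there $\beta'=\tfrac12$ for every $d'>3$. The fix is the one you yourself invoke at $d=3$: the hypothesis gives the isoperimetric inequality with exponent $(d-1)/d$, and both the Nash/Faber--Krahn route and the evolving-set route deliver the endpoint bound $\sup_{x,y}p_t(x,y)\le Ct^{-d/2}$ with no loss, so the approximation scheme should simply be dropped and the endpoint exponent used in all cases --- which is exactly what the paper's one-line proof does.
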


\begin{proof}
The method of evolving sets \cite{evolving} (also \cite[Chapter 8]{Gabor}) can be used to show that
if $\isodim(G) \geq d$ then $\sup_{x,y} p_t(x,y) \leq C t^{-d/2}$.  Thus,
the assertion for all $d \leq 4$ cases follows from Corollary \ref{cor:d dim HK},
and for $d>4$ we use Corollary \ref{cor:high dim HK volume}.
\end{proof}

\section{Examples}

\label{scn:examples}

\subsection{Transitive graphs}

\begin{thm} \label{thm:super-poly growth}
Let $G$ be a transitive graph. Fix $o \in G$ and let $B_r$ be the ball of radius $r$ about $o$.
Assume that $G$ has super-polynomial growth; that is, for any $d>0$ we have
$$ \liminf_{r \to \infty} |B_r| r^{-d} = \infty . $$

Let $(A_t)_t$ be DLA on $G$ started at $A_t = \{o\}$.
Then, a.s.\
$$ \forall \ \eps>0 \ , \ \limsup_{t \to \infty} t^{-\eps} \cdot \rad(A_t) = 0 . $$
\end{thm}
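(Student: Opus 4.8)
The plan is to deduce the statement from Corollary \ref{cor:isodim} by showing that a transitive graph of super-polynomial growth has arbitrarily large isoperimetric dimension, namely $\isodim(G) \geq d$ for every $d > 4$.

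The one genuinely substantive step is the passage from volume growth to the isoperimetric profile, and this is where transitivity enters. Write $V(r) = |B_r|$ for the growth function and $\rho(v) = \inf\{ r \geq 0 : V(r) \geq v \}$ for its generalized inverse. I would invoke the Coulhon--Saloff-Coste isoperimetric inequality for transitive graphs, which supplies universal constants $c_0, C_0 > 0$ such that every finite $A \subset G$ satisfies $|\p A| \geq c_0 \, |A| / \rho(C_0 |A|)$. Fix $d > 4$. Super-polynomial growth yields $r_0 = r_0(d)$ with $V(r) \geq r^{d}$ for all $r \geq r_0$, hence $\rho(v) \leq 2 v^{1/d}$ for all large $v$, and therefore $|\p A| \geq c_1 |A|^{1 - 1/d} = c_1 |A|^{(d-1)/d}$ for every finite $A$ with $|A|$ beyond some threshold $N_d$, where $c_1 = c_1(d) > 0$. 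For the sets with $|A| \leq N_d$ one uses the trivial bound $|\p A| \geq 1 \geq N_d^{-(d-1)/d} |A|^{(d-1)/d}$ (valid since $G$ is infinite and connected); shrinking the constant accordingly gives $|\p A| \geq c\,|A|^{(d-1)/d}$ for \emph{all} finite $A$, so $\isodim(G) \geq d$. I would also note at this point that super-polynomial growth forces $G$ to be transient, so the standing transience hypothesis of the earlier sections is satisfied.

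With $\isodim(G) \geq d$ established for an arbitrary $d > 4$, Corollary \ref{cor:isodim} gives almost surely a random constant $K = K(d)$ with $\rad(A_t) \leq K (\log t)\, t^{2/(d-2)}$ for all large $t$. Given $\eps > 0$, choose $d$ so large that $\tfrac{2}{d-2} < \eps$; then $t^{-\eps}\rad(A_t) \leq K (\log t)\, t^{2/(d-2) - \eps} \to 0$ as $t \to \infty$, the exponent being strictly negative, which already yields the $\limsup = 0$ in the claimed form. To get the conclusion simultaneously for all $\eps > 0$ on a single probability-one event, I would intersect these events over a sequence $\eps_n \downarrow 0$ and use monotonicity of $t^{-\eps}\rad(A_t)$ in $\eps$.

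The main obstacle — really the only non-routine ingredient — is the growth-to-isoperimetry inequality, which genuinely fails without transitivity (a graph can have enormous volume growth yet a thin bottleneck). The subtlety to watch is that $\isodim$ is defined via an inequality required of \emph{every} finite set with one fixed constant, so one must verify that the isoperimetric input is uniform in $A$ (it is, since the Coulhon--Saloff-Coste constant is independent of $A$) and then separately absorb the finitely many small sizes into the constant.
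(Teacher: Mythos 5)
Your proof is correct and follows essentially the same route as the paper: the Coulhon--Saloff-Coste inequality converts super-polynomial growth into $\isodim(G) \geq d$ for every $d$, and then Corollary \ref{cor:isodim} with $d \to \infty$ (i.e.\ $d$ chosen so that $\tfrac{2}{d-2} < \eps$) gives the claim. Your write-up merely fills in details the paper leaves implicit, such as absorbing small sets into the isoperimetric constant and intersecting over a sequence $\eps_n \downarrow 0$.
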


\begin{proof}
The Coulhon-Saloff-Coste inequality \cite[Chapter 5]{Gabor} tells us that under the condition
$\liminf_{r \to \infty} |B_r| r^{-d} = \infty$ we have that $\isodim(G) \geq d$.
Thus, the assertion follows from considering $d \to \infty$ in Corollary \ref{cor:isodim}.
\end{proof}

\subsection{Transitive graphs of polynomial growth}

Let $\Gamma$ be a finitely generated group.  Fix some finite,
symmetric generating set $S = S^{-1} , |S| < \infty$.
Let $G$ be the Cayley graph of $\Gamma$ with respect to $S$.
Consider $o=1_\Gamma$ as the root vertex.

A Cayley graph has {\em polynomial growth} if balls grow at most polynomially in the radius;
that is, if $\# \{ x \ : \ |x| \leq r \} \leq C r^d$ for some $C,d>0$ and every $r$.
The implicit constant $C$ depends on the specific choice of Cayley graph, but
the polynomial degree $d$ does not.
Thus, we may speak of a group with polynomial growth.

A famous theorem of Gromov \cite{Gromov}
states that any finitely generated group of polynomial growth
has a finite index subgroup that is nilpotent.
As a consequence, the theory of nilpotent groups tells us that if $\Gamma$ is a finitely generated group
of polynomial growth then there exists an {\em integer} $d>0$, such that
for any Cayley graph $G$ of $\Gamma$, there exists a constant $C>0$ such that
$C^{-1} r^d \leq \# \{ x \ : \ |x| \leq r \} \leq C r^d$ (\ie the graph has {\em uniform} polynomial growth).
Moreover, results in \cite{Trofimov}
imply that any transitive graph $G$ of polynomial growth
also satisfies the above uniform growth estimate (with integer $d$).

\begin{thm} \label{thm:poly-growth}
Let $G$ be a transitive graph of polynomial growth of degree $d > 2$.
Fix $o \in G$ and
let $(A_t)_t$ be DLA on $G$ started at $A_0 = \{o \}$.
Then, when $d \geq 4$, a.s.\
$$ \limsup_{t \to \infty} t^{-2/d}  \cdot \rad(A_t) < \infty , $$
and when $d=3$, a.s.\
$$ \limsup_{t \to \infty}  (t \log t)^{-1/2} \cdot \rad(A_t) < \infty . $$
\end{thm}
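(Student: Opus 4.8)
The plan is to reduce everything to the heat-kernel estimate $\sup_{x,y} p_t(x,y) \leq C t^{-d/2}$ and then invoke the machinery already built in the previous sections. First I would recall that a transitive graph of polynomial growth of degree $d$ has uniform polynomial growth $c^{-1} r^d \leq |B_r| \leq c r^d$ with integer $d$ (this is the content of the Gromov/Trofimov discussion preceding the statement), and that for such graphs the standard on-diagonal heat-kernel bound $p_t(x,x) \leq C t^{-d/2}$ holds; by transitivity and Cauchy–Schwarz this upgrades to $\sup_{x,y} p_t(x,y) \leq C t^{-d/2}$ for all $t>0$. This places us squarely in the hypotheses of the earlier corollaries.

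For $d = 3$ and for $3 < d \leq 4$ the result is then immediate: Proposition \ref{prop:radius Beurling from HK} gives the $\vphi$-radius Beurling estimate with $\vphi(r) = C\log r \cdot r^{-1}$ when $d=3$ and $\vphi(r) = Cr^{-1}$ when $d>3$, and Corollary \ref{cor:d dim HK} then yields $\limsup_t (t\log t)^{-1/2}\rad(A_t) < \infty$ when $d = 3$ and $\limsup_t t^{-1/2}\rad(A_t) < \infty$ when $3 < d \leq 4$. Note $t^{2/d} \geq t^{1/2}$ for $d \geq 4$, so the bound $t^{1/2}$ claimed for $d=4$ is consistent with the stated $t^{2/d}$. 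For $d \geq 5$ one would instead reach for Corollary \ref{cor:high dim HK volume}, but that gives only $(\log t) \cdot t^{2/(d-2)}$, which is weaker than the target $t^{2/d}$. So the main work — and the main obstacle — is to obtain the \emph{sharp} volume Beurling estimate $\vphi(s) = Cs^{-1+2/d} = C s^{-(d-2)/d}$ in the polynomial-growth setting, rather than the lossy exponent $2/(d-2)$ coming from the generic argument of Proposition \ref{prop:general volume Beurling}.

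To get the sharp exponent I would exploit polynomial growth more carefully. Given a connected set $A \ni o$ with $|A| = s$, transitivity and the uniform growth bound force $A \subset B(x, R)$ for every $x \in A$ with $R \asymp s^{1/d}$ (since a ball of radius $R$ has volume $\asymp R^d \geq s$). Now for any $x \in A$ we bound $\sum_{y \in A} g(x,y)$: using the Gaussian heat-kernel upper bound $p_t(x,y) \leq C t^{-d/2}\exp(-\dist(x,y)^2/Ct)$ (Hebisch–Saloff-Coste, as in Proposition \ref{prop:radius Beurling from HK}) and the fact that $A$ sits inside a ball of radius $R \asymp s^{1/d}$, one has $\sum_{y\in A} g(x,y) = \sum_t \Pr_x[X_t \in A] \leq \sum_t \min\{1, Ct^{-d/2}s\}$, but more efficiently, since the walk must stay within distance $R$ of $x$ to contribute, the relevant time scale is $t \lesssim R^2 \asymp s^{2/d}$, giving $\sum_{y\in A} g(x,y) \lesssim \sum_{t \leq CR^2} t^{-d/2} s + (\text{tail}) \lesssim s^{2/d}$ for $d > 4$ (the small-$t$ terms where $t^{-d/2}s > 1$ contribute $O(s^{2/d})$ as well, since there are $O(s^{2/d})$ of them). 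Hence by \eqref{eqn:capac}, $\capac(A) \geq |A| / \sup_{x}\sum_{y} g(x,y) \gtrsim s^{1 - 2/d} = s^{(d-2)/d}$, and therefore $h_A(x) \leq D/\capac(A) \leq C s^{-(d-2)/d}$, i.e. a $\vphi$-volume Beurling estimate with $\alpha = (d-2)/d = 1 - 2/d$ and $\beta = 0$. Feeding this into Theorem \ref{thm:kesten bound volume}(1) gives $\limsup_t t^{\alpha - 1}\rad(A_t) = \limsup_t t^{-2/d}\rad(A_t) < \infty$ a.s., which is exactly the claim for $d \geq 4$ (and recovers $t^{1/2}$ when $d=4$). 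The delicate point to check in this last step is the bookkeeping of the small-$t$ regime in $\sum_t \min\{1, Ct^{-d/2}s\}$ and confirming it is genuinely $O(s^{2/d})$ and not larger; this is where the integer-valued, uniform nature of the growth exponent $d$ — so that no $\eps$-loss creeps in — is essential.
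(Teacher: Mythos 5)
Your overall architecture is the right one, and for $d=3$ (and $d=4$ via the radius route) it coincides with the paper's: heat kernel decay $\sup_{x,y}p_t(x,y)\le Ct^{-d/2}$ from uniform polynomial growth, then Proposition \ref{prop:radius Beurling from HK} and Corollary \ref{cor:d dim HK}. You also correctly diagnose that for large $d$ the generic exponent $\tfrac{2}{d-2}$ of Proposition \ref{prop:general volume Beurling} is too weak and that the theorem hinges on the sharp volume Beurling estimate $\vphi(s)=Cs^{-1+2/d}$. The gap is in how you derive it. The claim that a connected set $A\ni o$ with $|A|=s$ satisfies $A\subset B(x,R)$ for every $x\in A$ with $R\asymp s^{1/d}$ is false: that a ball of radius $R$ has volume at least $s$ gives no upper bound on the diameter of $A$, and a path of $s$ vertices is connected with radius of order $s$, not $s^{1/d}$. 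The follow-up heuristic ``the walk must stay within distance $R$ of $x$ to contribute, so the relevant time scale is $t\lesssim R^2$'' is likewise unjustified (at times $t\gg R^2$ the walk can still lie in $B(x,R)$). Fortunately neither claim is needed: the first inequality you wrote, $\sum_{y\in A}g(x,y)=\sum_t\Pr_x[X_t\in A]\le\sum_t\min\{1,\,Ct^{-d/2}s\}$, already suffices, and the tail you flag as delicate is a one-line computation valid for every $d>2$: with $T\asymp(Cs)^{2/d}$ the head contributes at most $T+1=O(s^{2/d})$ terms of size $1$, while $\sum_{t>T}Cst^{-d/2}\le\tfrac{2}{d-2}\,Cs\,T^{1-d/2}=O(s^{2/d})$. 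Hence $\sup_{x\in A}\sum_{y\in A}g(x,y)\le C_d s^{2/d}$, so by \eqref{eqn:capac} $\capac(A)\ge c\,s^{1-2/d}$, $h_A(x)\le Cs^{-1+2/d}$, and Theorem \ref{thm:kesten bound volume}(1) with $\alpha=1-\tfrac2d$, $\beta=0$ gives the claimed $t^{2/d}$ bound (this covers all $d\ge4$, so the special-casing of $d=4$ is unnecessary). With that repair your proof is complete.

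It is worth noting that the repaired argument is genuinely different from the paper's proof of the $d\ge4$ case. The paper obtains the sharp volume estimate spatially: Hebisch--Saloff-Coste gives $g(x,y)\le C(1+\dist(x,y))^{2-d}$, a rearrangement argument pushes the mass of $A$ onto small spheres using the upper growth bound $\#\{|y|=r\}\le Cr^{d-1}$, and the lower bound $|B_R|\ge cR^d$ converts the resulting radius into $|A|^{1/d}$, with transitivity used to translate the root. Your (corrected) time-side splitting of $\sum_t\Pr_x[X_t\in A]$ uses only the heat kernel bound and bounded degree, needs neither connectivity of $A$ nor transitivity at that step, and in fact sharpens Proposition \ref{prop:general volume Beurling} from exponent $\tfrac{2}{d-2}+\eps$ to $\tfrac2d$ with no $\eps$ or logarithmic loss --- a small bonus of your route.
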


\begin{proof}
The Coulhon \& Saloff-Coste inequality \cite{CSC}
(also \cite[Chapter 5]{Gabor}) states that under condition
$C^{-1} r^d \leq \# \{ x \ : \ |x| \leq r \} $ for all $r>0$,
we have that $\isodim(G) \geq d$.

Thus, the $d=3$ case follows from Corollary \ref{cor:isodim}.

For the $d \geq 4$ case, it suffices to prove a $\vphi$-volume Beurling estimate,
with $\vphi(s) = C s^{-1+2/d}$, and plug this into Theorem \ref{thm:kesten bound volume}.

Indeed, the Coulhon \& Saloff-Coste inequality mentioned above gives that $\isodim(G) \geq d$.
Thus, as before, $g(x,y) \leq C (1+\dist(x,y) )^{2-d}$ by the Hebisch \& Saloff-Coste result \cite{HSC}.
This immediately implies that for any finite set $A$ containing $o$,
\begin{align*}
\sum_{y \in A} g(o,y)  & \leq \sum_{r=0}^{\infty} C (1+r)^{2-d} \cdot \# \{ y \in A \ : \ |y| = r \} .
\end{align*}
Since $(1+r)^{2-d}$ is decreasing in $r$, and since
$$ \sum_{r=0}^{\infty}   \# \{ y \in A \ : \ |y| = r \} = |A| , $$
this is maximized if we move more mass to the smaller $r$'s.  That is,
\begin{align*}
\sum_{y \in A} g(o,y)  & \leq C \cdot \sum_{r=0}^{R} (1+r)^{2-d} \# \{ y \ : \ |y| = r \} ,
\end{align*}
where $R$ is the smallest radius such that $|A| \leq  \# \{ y \ : \ |y| \leq R \}$.
Using the upper bound on the polynomial growth we obtain
\begin{align*}
\sum_{y \in A} g(o,y)  & \leq C + C \cdot \sum_{r=1}^{R} r^{2-d} r^{d-1} \leq C R^2 .
\end{align*}
Since $\isodim(G) \geq d$, we know that $\# \{ y \ : \ |y| \leq r \} \geq c r^d$,
for some $c>0$ and all $r$.  Thus, we arrive at
$R^2 \leq C (\# \{ y \ : \ |y| \leq R-1 \} )^{2/d} \leq C |A|^{2/d}$.
Since this holds for any set $A$ containing $o$, by translating, we conclude that
for any finite $A$ and any $x \in A$,
$$ \sum_{y \in A} g(x,y) \leq C |A|^{2/d} . $$
This implies that
$$ \capac(A) \geq C |A|^{1-2/d} , $$
which in turn implies that
$h_A(x) \leq C |A|^{-1+2/d}$.
We conclude that $G$ satisfies a $\vphi$-volume Beurling estimate with
$\vphi(s) =  C s^{-1+2/d}$.

Using this in Theorem \ref{thm:kesten bound volume} we obtain that for $d >3$,
a.s.\
$$ \limsup_{t \to \infty} t^{-2/d} \cdot \rad(A_t) < \infty . $$
\end{proof}

\begin{rem}
It is worth noting that the above proof works for all $d>2$.
For transitive graphs, $d$ must be an integer,
so this is valid for $d=3$ as well.
This is the original bound given in Kesten's papers \cite{Kesten1, Kesten2, Kesten3} for $\Z^d$,
and for $d=3$ it gives a bound of
$$ \limsup_{t \to \infty} t^{-2/3} \cdot \rad(A_t) < \infty . $$

However, in the $d=3$ case,
the bound obtained using the radius Beurling estimate,
namely,
$$ \limsup_{t \to \infty}  (t \log t)^{-1/2} \cdot \rad(A_t) < \infty  $$
is better than the one obtained with the volume estimate.
\end{rem}

\subsection{Perturbations of $\Z^d$}

Consider the following random perturbation of $\Z^d$.
Let every vertex of $\Z^d$ be declared open with probability $p$
and closed with probability $1-p$ independently.
It is well known that there exist some $0< p_c(d) < 1$,
such that when $p>p_c(d)$ this results in a graph containing a unique infinite connected component.
Let this random subgraph of $\Z^d$ be denoted by $\Omega_d$.

The (quenched) heat kernel and volume growth
on this graph have been thoroughly studied.
It is known (see \cite{MR04}) that the heat kernel decays
like in $\Z^d$ itself, for a.e.\ $\Omega_d$ (provided $p>p_c(d)$).
Since $\Omega_d$ is a subgraph of $\Z^d$,
the volume of balls of radius $r$ in this graph grows at most like $O( r^d)$.

With these observations it is straightforward to prove the following.
We omit the proof as it basically follows the same argument as of the proof for the transitive case.

\begin{thm} \label{thm:super-crit perc}
Let $\Omega_d$ be the random graph obtained by taking the infinite component of
super-critical percolation on $\Z^d$,
conditioned on $0 \in \Omega_d$.
Let $(A_t)_t$ be DLA on $\Omega_d$ started at $A_0 = \{0\}$.

Then, for a.e.\ $\Omega_d$ and a.s.\ with respect to the DLA process,
$$ \limsup_{t \to \infty} t^{-2/d}  \cdot \rad(A_t) < \infty , $$
for $d \geq 4$,
and when $d=3$,
$$ \limsup_{t \to \infty}  (t \log t)^{-1/2} \cdot \rad(A_t) < \infty . $$
\end{thm}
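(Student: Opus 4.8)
The plan is to mirror the argument used for transitive graphs of polynomial growth (Theorem~\ref{thm:poly-growth}), since the two ingredients that were really used there --- namely $d$-dimensional heat kernel upper bounds and polynomial volume growth of degree $d$ --- are both available for the supercritical percolation cluster $\Omega_d$ for a.e.\ realization. Concretely, the result of Mathieu \& Remy (respectively Barlow) gives that for a.e.\ $\Omega_d$ with $p>p_c(d)$ one has $\sup_{x,y} p_t(x,y) \leq C(\Omega_d)\, t^{-d/2}$ for all $t>0$, where $C(\Omega_d)$ is a random but a.s.\ finite constant; and the fact that $\Omega_d \subset \Z^d$ immediately gives $\#\{x \in \Omega_d : \dist_{\Omega_d}(x,0) \leq r\} \leq \#\{x \in \Z^d : |x| \leq r\} \leq C r^d$. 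Note that only an \emph{upper} bound on volume growth and an \emph{upper} bound on the heat kernel are needed in the proof of Theorem~\ref{thm:poly-growth}, which is convenient because matching lower bounds on $\Omega_d$ require more care (though they are also known).

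First I would record the heat kernel bound and feed it into Proposition~\ref{prop:radius Beurling from HK} to get a $\vphi$-radius Beurling estimate, with $\vphi(r) = C r^{-1}$ when $d>3$ and $\vphi(r) = C \log r \cdot r^{-1}$ when $d=3$; then Theorem~\ref{thm:kesten bound radius} (resp.\ the $d=3$ computation in the proof of Corollary~\ref{cor:d dim HK}) yields $\limsup_t t^{-2/d}\rad(A_t) < \infty$ for $d>3$ and $\limsup_t (t\log t)^{-1/2}\rad(A_t)<\infty$ for $d=3$. This already handles $d=3$ and gives the weaker bound $t^{-1/2}$ for $d\geq 4$. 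To sharpen $d\geq 4$ to $t^{-2/d}$, I would repeat the Green-function-on-a-ball estimate from the proof of Theorem~\ref{thm:poly-growth}: using the Carne--Varopoulos / Hebisch--Saloff-Coste Gaussian bound coming from $d$-dimensional on-diagonal decay, $g(x,y) \leq C(1+\dist(x,y))^{2-d}$, and then summing over $A$ using that for a connected $A$ containing $o$ the ball-volume upper bound $\#\{y : \dist(y,o)\leq r\} \leq C r^d$ forces $\sum_{y\in A} g(o,y) \leq C R^2$ where $R$ is the smallest radius with $|A| \leq \#\{y : \dist(y,o)\leq R\}$. Here the one genuinely new point relative to the transitive case is that $\Omega_d$ is not transitive, so ``translating'' the estimate for sets containing $o$ to arbitrary sets is not automatic.

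The main obstacle, therefore, is precisely this loss of transitivity: in the transitive proof one proves $\sum_{y\in A} g(o,y) \leq C|A|^{2/d}$ for $A\ni o$ and then applies a graph automorphism to move the base point, but $\Omega_d$ has no such symmetry. The fix is to observe that the two estimates actually used --- the Gaussian Green function bound $g(x,y)\leq C(1+\dist(x,y))^{2-d}$ and the ball-volume bound $\#\{y : \dist(y,x)\leq r\}\leq C r^d$ --- hold uniformly in the base point $x\in\Omega_d$, with the \emph{same} random constant $C(\Omega_d)$ (the heat kernel bound is a two-point statement, and the volume bound is inherited from $\Z^d$ at every vertex). Repeating the summation argument verbatim with $o$ replaced by an arbitrary $x\in A$ then gives $\sum_{y\in A} g(x,y) \leq C|A|^{2/d}$ for all $x\in A$, hence by \eqref{eqn:capac} that $\capac(A)\geq c|A|^{1-2/d}$ and $h_A(x)\leq D/\capac(A)\leq C|A|^{-1+2/d}$, i.e.\ a $\vphi$-volume Beurling estimate with $\vphi(s)=Cs^{-1+2/d}$. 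Plugging this into Theorem~\ref{thm:kesten bound volume} (case~(1) with $\alpha=1-2/d$, $\beta=0$) gives $\limsup_t t^{-2/d}\rad(A_t)<\infty$ a.s.\ for $d\geq 4$. A secondary bookkeeping point worth stating explicitly is the order of quantifiers: one first fixes a good realization $\Omega_d$ (in the full-measure set where the heat kernel and volume bounds hold), which pins down the constant $C(\Omega_d)$, and only then runs the DLA/Borel--Cantelli arguments of Section~2, so that all ``a.s.'' statements in those arguments are with respect to the DLA randomness for that fixed graph --- exactly as phrased in the statement of Theorem~\ref{thm:super-crit perc}. Since every step is now a verbatim transcription of arguments already carried out in the excerpt, the proof is indeed safely omitted, as the authors indicate.
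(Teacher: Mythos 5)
Your proposal follows exactly the route the paper intends (and omits): the quenched $t^{-d/2}$ heat-kernel bound from \cite{MR04} together with the ball-volume upper bound inherited from $\Z^d$, fed into the radius-Beurling machinery (Proposition~\ref{prop:radius Beurling from HK}, Corollary~\ref{cor:d dim HK}) for $d=3$ and into the Green-function summation yielding the $\vphi(s)=Cs^{-1+2/d}$ volume Beurling estimate, as in Theorem~\ref{thm:poly-growth}, for $d\geq 4$. Your remark that this summation argument only needs the heat-kernel and volume bounds to hold uniformly in the base point (rather than genuine transitivity) is precisely the detail that justifies the paper's ``same argument as the transitive case'' claim, so the approach is essentially identical to the paper's.
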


\subsection{pre-Sierpinski carpet}

Our methods are robust enough  to lend themselves to more general situations, such as fractal
graphs.  As an example, we consider $\mathcal{S}_n$,
the $n$-dimensional pre-Sierpinski carpet, as defined in \cite{Osada}.
Is is shown there that the heat kernel on $\mathcal{S}_n$ decays like
$$ \sup_{x,y} p_t(x,y) \leq C t^{-d(n)/2} , $$
where
\begin{align}
\label{eqn:dn}
 d(n) & = \frac{ \log (3^n-1) }{ \log (3^n-1) - \log (3^{n-1} - 1) } .
\end{align}
Using this in Corollaries \ref{cor:d dim HK} and  \ref{cor:high dim HK volume}
we can conclude:

\begin{thm} \label{thm:Sierpinski}
Let $\mathcal{S}_n$ be the $n$-dimensional pre-Sierpinski carpet (as defined in \cite{Osada}).
Let $(A_t)_t$ be DLA on $\mathcal{S}_n$, started at $A_0 = \{o\}$,
for some fixed origin $o \in \mathcal{S}_n$.
Then, a.s.\
$$ \limsup_{t \to \infty} t^{-\beta} \cdot \rad(A_t) < \infty , $$
where
$$ \beta =
\begin{cases}
\tfrac{ \log_2 (13)  - 2}{3} = 0.5568\ldots & \textrm{ if } n = 3 , \\
\tfrac12 & \textrm{ if } n = 4 .
\end{cases}
$$
When $n \geq 5$
we have a.s.\
$$ \limsup_{t \to \infty} (\log t)^{-1} t^{-\tfrac{2}{d(n)-2} } \cdot \rad(A_t) < \infty $$
where $d(n)$ is given in \eqref{eqn:dn}.
\end{thm}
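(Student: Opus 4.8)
The plan is to feed the heat-kernel bound on $\mathcal{S}_n$ — namely $\sup_{x,y} p_t(x,y) \leq C t^{-d(n)/2}$ with $d(n)$ as in \eqref{eqn:dn} — into the machinery already built in Sections on heat-kernel decay and isoperimetric dimension, and then to identify the resulting exponents in the three regimes $n=3$, $n=4$, and $n\geq 5$. The first step is to locate each $\mathcal{S}_n$ in the correct range of the parameter $d$: one checks that $d(3) = \log 13/(\log 13 - \log 8) = \log_2 13/(\log_2 13 - 3)$, which lies strictly between $2$ and $3$; that $d(4)$ lies strictly between $3$ and $4$; and that $d(n) > 4$ for all $n\geq 5$ (this last needs a short monotonicity check on the function $n\mapsto d(n)$, or a direct computation of $d(5)$). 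These three cases are exactly the three branches appearing in Corollary \ref{cor:d dim HK} (for $2<d<3$ and $3<d$) and Corollary \ref{cor:high dim HK volume} (for $d>4$).

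For $n=3$, since $2 < d(3) < 3$, Corollary \ref{cor:d dim HK} gives $\limsup_{t\to\infty} t^{-\beta}\rad(A_t) < \infty$ a.s.\ with $\beta = \tfrac{1}{d(3)-1}$. It remains to verify the arithmetic: writing $d(3) = \log_2 13/(\log_2 13 - 3)$, one gets $d(3) - 1 = 3/(\log_2 13 - 3)$, so $\tfrac{1}{d(3)-1} = \tfrac{\log_2 13 - 3}{3}$. This does not match the stated $\tfrac{\log_2 13 - 2}{3}$, so here I would instead route through the \emph{volume} Beurling estimate: for $2 < d < 4$ one has a $\vphi$-volume estimate with $\vphi(s) = C s^{-1 + 2/d}$ (this is exactly the computation done in the proof of Theorem \ref{thm:poly-growth}, which the remark there notes is valid for all $d>2$), and plugging $\alpha = 1 - 2/d$ into Theorem \ref{thm:kesten bound volume}(1) yields $\rad(A_t) \lesssim t^{1-\alpha} = t^{2/d}$. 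With $d = d(3)$ one computes $2/d(3) = 2(\log_2 13 - 3)/\log_2 13$; comparing with $\tfrac{\log_2 13 - 2}{3}$ one sees which of the radius-based bound $t^{1/(d-1)}$ and the volume-based bound $t^{2/d}$ is smaller and quotes that one — the stated exponent $0.5568\ldots$ should be whichever is the better (smaller) of the two. For $n=4$, since $3 < d(4) < 4$, the ``$d>3$'' branch of Corollary \ref{cor:d dim HK} gives directly $\beta = \tfrac12$, with no arithmetic to do. For $n\geq 5$, since $d(n) > 4$, Corollary \ref{cor:high dim HK volume} gives a.s.\ $\limsup_{t\to\infty}(\log t)^{-1} t^{-2/(d(n)-2)}\rad(A_t) = 0$, which is (more than) the claimed bound.

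The main obstacle is the bookkeeping of exponents in the $n=3$ case: one must be careful about which of the two available estimates (radius Beurling via Proposition \ref{prop:radius Beurling from HK} and Theorem \ref{thm:kesten bound radius}, giving exponent $\tfrac{1}{d-1}$, versus volume Beurling giving exponent $\tfrac{2}{d}$) produces the stated numerical value $\tfrac{\log_2 13 - 2}{3} = 0.5568\ldots$, and to make sure the value of $d(3)$ is substituted correctly — in particular that $\log(3^3-1) = \log 26$ and $\log(3^2-1) = \log 8$ so that $d(3) = \log 26/(\log 26 - \log 8) = \log 26/\log(26/8)$, and to reconcile this with the $\log_2 13$ form (note $26 = 2\cdot 13$ and $26/8 = 13/4$). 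Everything else is an application of the already-proven corollaries, with the only remaining content being the elementary verification that $d(n)$ falls in the asserted interval for each $n$, which follows from $3^n - 1$ and $3^{n-1}-1$ being comparable to $3^n$ and $3^{n-1}$ up to lower-order terms and a direct check at small $n$.
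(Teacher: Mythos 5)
Your overall route --- feeding Osada's heat-kernel bound $\sup_{x,y} p_t(x,y) \le C t^{-d(n)/2}$ into Corollary \ref{cor:d dim HK} for $n=3,4$ and Corollary \ref{cor:high dim HK volume} for $n\ge 5$ --- is exactly the paper's proof, and your treatment of $n=4$ (where $3<d(4)<4$ gives $\beta=\tfrac12$) and of $n\ge 5$ (where $d(n)>4$) is fine, modulo the routine verification that $d(n)$ lies in the claimed ranges.

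The gap is in the $n=3$ case, and it originates in an arithmetic slip: $3^3-1=26$, so $d(3)=\frac{\log 26}{\log 26-\log 8}=\frac{\log_2 13+1}{\log_2 13-2}\approx 2.76$, not $\frac{\log_2 13}{\log_2 13-3}$. With the correct value, $d(3)-1=\frac{3}{\log_2 13-2}$, so the $2<d<3$ branch of Corollary \ref{cor:d dim HK} gives precisely $\beta=\frac{1}{d(3)-1}=\frac{\log_2 13-2}{3}$, i.e.\ the stated exponent (numerically $0.5668\ldots$; the decimal $0.5568$ printed in the statement appears to be a typo, which may have fed your doubt). Because of the slip you concluded that the radius-based exponent does not match and rerouted through a volume Beurling estimate $\vphi(s)=Cs^{-1+2/d}$; but that estimate is not available here from the paper's results: Proposition \ref{prop:general volume Beurling} and Corollary \ref{cor:volume Beurling} require $d>4$, and the computation in the proof of Theorem \ref{thm:poly-growth} uses transitivity and uniform polynomial volume growth of degree equal to the heat-kernel exponent $d$, which fails for $\mathcal{S}_3$ (its volume-growth exponent $\log_3 26\approx 2.97$ differs from $d(3)\approx 2.76$). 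Even granting it, $t^{2/d(3)}\approx t^{0.72}$ is a strictly worse bound, and ending with ``quote whichever of the two is smaller'' leaves the identification of the stated exponent unproved. The fix is simply to drop the detour: the $2<d<3$ branch of Corollary \ref{cor:d dim HK}, with $d(3)$ computed correctly, already yields the stated $n=3$ exponent.
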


\subsection{Graphs of exponential growth}

Let $G$ be a transitive graph of exponential growth;
that is, there exist constants $K>1$ such that
$|B(x,r)| \geq K^r$ for all $r \geq 1$ (and all vertices $x$),
where $B(x,r)$ is the ball of radius $r$ about $x$.
The Coulhon \& Saloff-Coste inequality \cite{CSC} (also \cite[Chapter 5]{Gabor})
tells us that in this case, there exists a constant $c>0$ such that
for any finite set $A$ in $G$ we have
\begin{align} \label{eqn:log iso}
| \p A | & \geq c \cdot |A| \cdot (\log (|A|+2) )^{-1} .
\end{align}

The conclusion \eqref{eqn:log iso} can also be reached in the non-transitive case
under a more restrictive growth condition.
We say that a graph $G$ has \define{pinched exponential growth} if there exist constants $C , K >1$
such that for any vertex $x$ in $G$ and any radius $r$ we have that $B(x,r)$,
the ball of radius $r$ about $x$, is of size $C^{-1} K^r \leq |B(x,r)| \leq C K^r$.
(That is, pinched exponential growth means that all balls grow exponentially
with the same exponent,
up to constants.)
Benjamini \& Schramm showed in \cite{BS03} that if $G$ has pinched exponential growth
then \eqref{eqn:log iso} holds for all finite sets $A \subset G$.

\begin{prop} \label{prop:exp growth Beurling}
Let $G$ be a graph that is either transitive of exponential growth or
non-transitive of pinched exponential growth.
Then, $G$ satisfies a
$\vphi$-volume Beurling estimate with
$\vphi(s) = C s \cdot (\log s)^{-3}$.
\end{prop}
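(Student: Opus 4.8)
The plan is to convert the logarithmic isoperimetric inequality \eqref{eqn:log iso} into a lower bound on the capacity of an arbitrary connected set $A \ni o$, and then to use $h_A(x) \le D/\capac(A)$. The first step is to deduce heat-kernel decay from \eqref{eqn:log iso}: by the method of evolving sets (as invoked in the proof of Corollary \ref{cor:isodim}), a $\psi$-isoperimetric profile of the form $|\p A| \ge c\,|A| (\log(|A|+2))^{-1}$ yields a heat-kernel bound $\sup_{x,y} p_t(x,y) \le C (\log t)^{C'} / t$ for all $t$, perhaps after adjusting constants. Summing in $t$ gives a Green-function estimate of the form $g(x,y) \le C \exp(-c\,\dist(x,y))$ up to polylogarithmic corrections, since a ball of radius $r$ around $x$ now has volume at least $K^{r}$ and the walk must spend correspondingly little time there; more crudely one gets $g(x,y) \le C (\dist(x,y)+1)^{-10}$, say, which is all we need.

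The second step is the capacity estimate. As in the proof of Proposition \ref{prop:radius Beurling from HK}, pick a geodesic $o = v_0, v_1, \dots, v_r \in A$ with $|v_j| = j$, where $r = \rad(A)$, and set $Q = \{v_0, \dots, v_r\}$. By monotonicity of capacity, $h_A(x) \le D / \capac(Q)$. Using $g(v_i, v_j) \le C(|i-j|+1)^{-\gamma}$ for a large exponent $\gamma$, the sum $\sum_{j} g(v_i, v_j)$ is bounded by an absolute constant, so \eqref{eqn:capac} gives $\capac(Q) \ge c\,|Q| = c\,(r+1) = c\,(\rad(A)+1)$. This already yields a \emph{radius} Beurling estimate $\vphi(r) = C/r$, which is far stronger than the claimed volume estimate. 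To get the stated $\vphi(s) = C s (\log s)^{-3}$ in terms of $|A| = s$, one simply notes that a connected set of size $s$ in a graph of (pinched) exponential growth has radius at least $c \log s$: indeed $|B(o, r)| \le C K^{r}$ forces $s \le C K^{\rad(A)}$, hence $\rad(A) \ge c \log s$. Therefore $\capac(A) \ge \capac(Q) \ge c\, \rad(A) \ge c \log s$, giving $h_A(x) \le C / \log s$, which is (much) stronger than $C s (\log s)^{-3}$; a fortiori the claimed bound holds.

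The main obstacle is the first step: getting the right polylogarithmic power in the heat-kernel (equivalently Green-function) bound from \eqref{eqn:log iso} via evolving sets requires care, though only the qualitative statement $g(x,y) \to 0$ faster than any polynomial in $\dist(x,y)$ is needed for the argument, and that follows from exponential volume growth together with standard on-diagonal/off-diagonal heat-kernel comparison. (In fact, since the claimed $\vphi(s) = Cs(\log s)^{-3}$ is weaker than what this argument delivers, one could alternatively run a cruder estimate: exponential volume growth gives $\capac(A) \ge c\,\rad(A)/\log(\rad(A))$ type bounds even with the weaker Green-function decay $g(x,y) \le C\,\dist(x,y)^{2-d}$ coming from treating $G$ as $\isodim \ge d$ for any fixed large $d$, since $\isodim(G) = \infty$ here.) The extra factor $(\log s)^{-3}$ in the statement leaves ample room, so I would aim for the simplest route: establish $\capac(A) \ge c\,\rad(A)/\log \rad(A) \ge c \log s / \log\log s$, conclude $h_A(x) \le C \log\log s / \log s \le C s (\log s)^{-3}$ for $s$ large, and absorb small $s$ into the constant $C$.
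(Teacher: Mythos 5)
There is a genuine mismatch between what you prove and what the proposition is meant to assert. The displayed $\vphi(s) = C s (\log s)^{-3}$ is a typo: as written it exceeds $1$ for large $s$, so any probability satisfies it vacuously. The intended estimate --- the one the paper actually proves and the only one of any use downstream --- is $\vphi(s) = C (\log s)^{3} s^{-1}$, i.e.\ $\capac(A) \geq c\,|A| (\log |A|)^{-3}$; this is precisely the form required in case (2) of Theorem \ref{thm:kesten bound volume} with $\beta = 3$, which is how Theorem \ref{thm:exp growth} obtains the $(\log t)^4$ growth bound. Your argument yields $h_A(x) \leq C/\log s$ (capacity at least a constant times the radius of a geodesic $Q \subset A$, together with $\rad(A) \geq c \log s$ from exponential growth), and this is \emph{much weaker} than $C (\log s)^3 s^{-1}$, not stronger: fed into Lemma \ref{lem:kesten} it gives $I_\vphi(1,t) \asymp t/\log t$ and hence nothing remotely like a polylogarithmic bound on $\rad(A_t)$. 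The idea you are missing is to lower-bound the capacity of the \emph{whole} set $A$ proportionally to its volume: from the stretched-exponential bound $\sup_{x,y} p_t(x,y) \leq C \exp(-c t^{1/3})$ (which is what evolving sets actually give under \eqref{eqn:log iso}) one gets $\Pr_x [X_t \in A] \leq |A| \cdot C e^{-c t^{1/3}}$, which is at most $t^{-2}$ once $t \geq C (\log |A|)^3$; hence $\sum_{y \in A} g(x,y) \leq C (\log |A|)^3$ for every $x \in A$, and \eqref{eqn:capac} gives $\capac(A) \geq c |A| (\log |A|)^{-3}$. A geodesic alone cannot see the volume of $A$, so no argument passing through $\capac(Q)$ can reach the intended bound.

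Separately, your first step is internally flawed as written: a heat-kernel bound $\sup_{x,y} p_t(x,y) \leq C (\log t)^{C'}/t$ is not summable in $t$, so it yields no Green-function decay whatsoever by summation (on $\Z^2$ one has $p_t \asymp 1/t$ and the Green function is infinite); in particular neither $g(x,y) \leq C e^{-c \dist(x,y)}$ nor $g(x,y) \leq C(\dist(x,y)+1)^{-10}$ follows from it. What \eqref{eqn:log iso} gives via evolving sets is the stretched-exponential decay above, and to convert that into pointwise off-diagonal Green decay you would additionally need a Gaussian/Carne--Varopoulos factor. These repairs are feasible, but they only salvage your $C/\log s$ estimate, which, as explained, is not the statement the paper proves or needs.
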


\begin{proof}
By using the method of evolving sets \cite[Chapter 8]{Gabor},
once \eqref{eqn:log iso} holds, we obtain a bound on the heat kernel:
$$ \sup_{x,y} p_t(x,y) \leq C \exp ( - c t^{1/3} ) . $$

Now we continue similarly to the proof of Proposition \ref{prop:general volume Beurling}.
If $\Pr_x [ X_t \in A ] \geq t^{-2}$, then
$$ t^{-2} \leq \Pr_x [ X_t \in A ] \leq |A| \cdot C \exp ( - c t^{1/3} ) , $$
which implies that (perhaps by modifying the constant $C>0$)
$t \leq C (\log |A|)^3$.  This in turn implies that for any $x \in A$,
$$ \sum_{y \in A} g(x,y) \leq \sum_t \Pr_x [ X_t \in A ] \leq C (\log |A|)^3 . $$
Thus,
$\capac(A) \geq |A| \cdot (\log |A|)^{-3}$ and the Beurling estimate follows.
\end{proof}

Using Theorem \ref{thm:kesten bound volume} we now arrive at:

\begin{thm} \label{thm:exp growth}
Let $G$ be a graph that is either transitive of exponential growth or
non-transitive of pinched exponential growth.
Let $o$ be some vertex, and let $(A_t)_t$ be DLA on $G$ started at $A_0 = \{o\}$.

Then, a.s.\
$$ \limsup_{t \to \infty} (\log t)^{-4} \cdot \rad(A_t) < \infty . $$
\end{thm}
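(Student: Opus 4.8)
The plan is to derive Theorem \ref{thm:exp growth} as a direct application of the machinery already assembled in the paper, namely the volume Beurling estimate of Proposition \ref{prop:exp growth Beurling} combined with Theorem \ref{thm:kesten bound volume}. First I would note that Proposition \ref{prop:exp growth Beurling} tells us that under either hypothesis (transitive of exponential growth, or non-transitive of pinched exponential growth), the graph $G$ satisfies a $\vphi$-volume Beurling estimate with $\vphi(s) = C s^{-1} (\log s)^3$ (the statement in the proposition is written as $C s \cdot (\log s)^{-3}$, but for it to be a genuine Beurling bound — a probability less than $1$ that decays — one reads the decaying expression $C (\log s)^3 / s$; I would be careful to present this correctly).

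Next I would match this $\vphi$ against the two templates in Theorem \ref{thm:kesten bound volume}. Since $\vphi(s) = C (\log s)^3 s^{-1}$ is exactly of the form in case (2) of that theorem with $\beta = 3$ and the same constant $C > 0$, Theorem \ref{thm:kesten bound volume}(2) applies verbatim and yields that a.s.
$$ \limsup_{t \to \infty} (\log t)^{-\beta - 1} \cdot \rad(A_t) < \infty . $$
Substituting $\beta = 3$ gives $(\log t)^{-4} \cdot \rad(A_t)$ bounded a.s., which is precisely the asserted conclusion. So the proof is essentially two sentences: invoke the Beurling estimate, then invoke the DLA growth bound.

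The only point requiring a moment of care — and really the "main obstacle," though it is minor — is bookkeeping on which form of $\vphi$ is in force and verifying that the hypotheses of Theorem \ref{thm:kesten bound volume} are met: $G$ has bounded degree (true for transitive graphs and, in the pinched exponential growth case, this is part of the standing assumption that all our graphs have bounded degree), and $\vphi$ is non-increasing with limit $0$, which holds for $C (\log s)^3 / s$ for $s$ large. I would also remark that the polylogarithmic rate here is presumably far from sharp — exponential growth should force much slower DLA growth — but proving anything better seems to require genuinely new input beyond heat-kernel control, so this is the bound our method gives.

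\begin{proof}
By Proposition \ref{prop:exp growth Beurling}, the graph $G$ satisfies a $\vphi$-volume Beurling estimate with
$$ \vphi(s) = C (\log s)^{3} \cdot s^{-1} $$
for some constant $C>0$. This is exactly of the form appearing in case (2) of Theorem \ref{thm:kesten bound volume}, with $\beta = 3$. Since $G$ has bounded degree, Theorem \ref{thm:kesten bound volume}(2) applies and gives that a.s.
$$ \limsup_{t \to \infty} (\log t)^{-\beta - 1} \cdot \rad(A_t) < \infty . $$
Plugging in $\beta = 3$ yields the claimed bound.
\end{proof}
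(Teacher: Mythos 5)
Your proof is correct and is exactly the paper's own route: the theorem is stated there as an immediate consequence of Proposition \ref{prop:exp growth Beurling} fed into case (2) of Theorem \ref{thm:kesten bound volume} with $\beta=3$. You were also right to read the Beurling bound in its decaying form $C(\log s)^3 s^{-1}$ (the expression $Cs(\log s)^{-3}$ in the proposition's statement is a typo, as its proof shows $\capac(A)\geq |A|(\log|A|)^{-3}$ and hence $h_A(x)\leq D(\log|A|)^3/|A|$).
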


\subsection{Non-amenable graphs}

The \define{Cheeger constant} of a graph $G$ is defined to be
$$ \Phi(G) : = \inf_{\substack{A \subset G \\ |A| < \infty } } \frac{|\p A|}{|A|} . $$
A graph $G$ is \define{non-amenable} if $\Phi(G) > 0$.
Note that any non-amenable graph has exponential volume growth, and also
has infinite isoperimetric dimension.
For more on this fundamental notion see \eg \cite{Gabor}.
It is a well known result of Kesten (see \eg \cite[Chapter 7]{Gabor})
that $\Phi>0$ if and only if the bottom of the spectrum of the Laplacian $\Delta = I-P$
is positive.  Here, $P(x,y) = p_1(x,y)$ is the transition matrix of the random walk on $G$,
and the bottom of the spectrum of $\Delta$ is just the infimum over all positive eigenvalues of $\Delta$.

Lemma 2.1 in \cite{BNP} states the following: Let $G$ be a graph, and let $\lambda$
be the bottom of the spectrum of the Laplacian on $G$
(so $\lambda>0$ if and only if $G$ is non-amenable, by Kesten's result).
Then, for any finite subset $A$ of $G$,
\begin{align}
\label{eqn:BNP}
\capac(A) & \geq \lambda \cdot \sum_{x \in A} \deg(x)  .
\end{align}
As a result, we have a volume Beurling estimate for any non-amenable graph $G$:

\begin{prop} \label{prop:non-amen Beurling}
Let $G$ be a non-amenable graph, and let $\lambda>0$ be the bottom of the spectrum
of the Laplacian on $G$.
Then, $G$ satisfies a $\vphi$-volume Beurling estimate with
$\vphi(s) = C (\lambda s)^{-1}$.
\end{prop}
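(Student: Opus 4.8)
The plan is to directly combine the capacity lower bound \eqref{eqn:BNP} with the definition of harmonic measure. Let $A \subset G$ be any finite set and $x \in A$. By the definition of the equilibrium measure, $e_A(x) = \deg(x) \cdot \Pr_x[T_A^+ = \infty] \leq \deg(x) \leq D$, where $D := \sup_y \deg(y) < \infty$ since $G$ has bounded degree. Therefore, using the inequality \eqref{eqn:BNP} from \cite{BNP}, which states $\capac(A) \geq \lambda \sum_{y \in A} \deg(y) \geq \lambda |A|$ (each term $\deg(y) \geq 1$), we obtain
\begin{align*}
h_A(x) = \frac{e_A(x)}{\capac(A)} \leq \frac{D}{\lambda |A|} .
\end{align*}

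Taking the supremum over $x \in A$ gives $\sup_{x \in A} h_A(x) \leq \frac{D}{\lambda} \cdot |A|^{-1} = C (\lambda |A|)^{-1}$ with $C = D$ (or, if one prefers the constant to absorb $\lambda$ differently, simply $C = D$). Since this holds for every finite connected set $A$ containing $o$, the function $\vphi(s) = C(\lambda s)^{-1}$ is non-increasing, tends to $0$, and witnesses a $\vphi$-volume Beurling estimate.

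There is essentially no obstacle here: the entire content is imported from \eqref{eqn:BNP}, and the only thing to check is the elementary bound $e_A(x) \leq D$ and the observation $\sum_{y \in A}\deg(y) \geq |A|$. One should double-check the normalization conventions in \cite{BNP} (whether their capacity matches ours, and whether $\lambda$ is the bottom of the spectrum of $\Delta = I - P$ as stated, rather than of a degree-weighted version); assuming these match the conventions fixed in the Notation subsection, the argument is complete.
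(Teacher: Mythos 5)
Your argument is correct and is exactly the paper's (implicit) derivation: the proposition is stated as an immediate consequence of \eqref{eqn:BNP}, combined with $e_A(x)\le\deg(x)\le D$ and $\sum_{y\in A}\deg(y)\ge |A|$, which is precisely what you wrote. The bounded-degree hypothesis you invoke is indeed needed for the constant $C$ and is assumed throughout the paper, so there is no gap.
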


Thus we can also deduce optimal bounds for the growth of DLA on non-amenable graphs.

\begin{thm} \label{thm:non amen DLA}
Let $G$ be a bounded degree non-amenable graph rooted at $o$.
Let $(A_t)_t$ be DLA on $G$ started at $A_0 = \{o\}$.

Then, a.s.\
$$ 0 < \inf_{t > 1}  (\log t)^{-1}  \cdot \rad(A_t) \leq
\limsup_{t \to \infty} (\log t)^{-1} \cdot \rad(A_t) < \infty . $$
\end{thm}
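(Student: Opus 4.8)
The plan is to prove the two inequalities separately. The upper bound is immediate from the machinery already in place: by Proposition~\ref{prop:non-amen Beurling}, a non-amenable $G$ with $\lambda>0$ the bottom of the spectrum of its Laplacian satisfies a $\vphi$-volume Beurling estimate with $\vphi(s)=C(\lambda s)^{-1}$, which is exactly case~(2) of Theorem~\ref{thm:kesten bound volume} with exponent $\beta=0$ and constant $C/\lambda$. That theorem then yields $\limsup_{t\to\infty}(\log t)^{-1}\cdot\rad(A_t)<\infty$ almost surely.

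For the lower bound I would argue deterministically, using only that $G$ has bounded degree $D\ge 2$. A ball of radius $r$ has $|B_r|\le C_D D^r$ for a constant $C_D$ depending only on $D$, and since $A_t\subseteq B_{\rad(A_t)}$ while $|A_t|=1+t$, this gives $1+t\le C_D D^{\rad(A_t)}$, that is $\rad(A_t)\ge\log_D\!\big((1+t)/C_D\big)$. Hence there are $t_0$ and $c>0$ (depending only on $D$) with $\rad(A_t)\ge c\log t$ for all integers $t\ge t_0$. For the remaining finitely many values $2\le t<t_0$ one uses that $\rad(\cdot)$ is non-decreasing in $t$ and that $\rad(A_1)=1$ (the first vertex added is a neighbour of $o$), so $(\log t)^{-1}\rad(A_t)\ge 1/\log t_0>0$ there. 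Taking the minimum over the two ranges gives a uniform positive lower bound on $(\log t)^{-1}\rad(A_t)$ over all $t>1$, which in particular holds almost surely.

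There is no genuine obstacle: the theorem is essentially a corollary of Proposition~\ref{prop:non-amen Beurling} and Theorem~\ref{thm:kesten bound volume}. The only points requiring a line of care are checking that the crude volume bound $|B_r|\le C_D D^r$ is the appropriate tool — it is, precisely because a non-amenable graph has exponential volume growth, so the $\log t$ order of $\rad(A_t)$ cannot be improved in either direction — and handling the small-$t$ regime in the lower bound so that the infimum, rather than merely a liminf, is controlled.
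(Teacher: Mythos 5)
Your proposal is correct and follows essentially the same route as the paper: the upper bound is exactly Proposition~\ref{prop:non-amen Beurling} fed into case~(2) of Theorem~\ref{thm:kesten bound volume} with $\beta=0$, and the lower bound is the same one-line deterministic statement the paper invokes. Your volume-counting version of the lower bound ($1+t=|A_t|\le |B_{\rad(A_t)}|\le C_D D^{\rad(A_t)}$, using only bounded degree, plus monotonicity of $\rad(A_t)$ to handle small $t$ and get the infimum rather than a liminf) is in fact the careful way to read the paper's terse remark, so no further changes are needed.
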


\begin{proof}
Plugging the Beurling estimate in Proposition \ref{prop:non-amen Beurling}
into Theorem \ref{thm:kesten bound volume}, we obtain the upper bound.
The lower bound is a deterministic (non-random) statement.
Since $G$ has exponential growth, $t$ particles must reach distance at least $c \log t$
for sufficiently small constant $c>0$.
\end{proof}

\section{Further questions}

We conclude with some possible questions for further research regarding the structure of the
final DLA aggregate.

\begin{ques}
Let $G$ be a Cayley graph and consider $(A_t)_t$, DLA on $G$ started at $A_0=\{1\}$.
Let $A_\infty = \bigcup_t A_t$ be the final aggregate.

How many topological ends does $A_\infty$ have?

Is there a nice characterization for $A_\infty$ having infinitely many ends, vs.\ finitely many?

Specifically, what about the above questions in the non-amenable setting?  In the hyperbolic setting?
(DLA in the hyperbolic setting was studied in \cite{Eldan}.)
\end{ques}

\begin{ques}
Let $G$ be a non-Liouville Cayley graph (\ie with a non-trivial Poisson boundary).
Consider $(A_t)_t$, DLA on $G$ started at $A_0=\{1\}$,
and let $A_\infty = \bigcup_t A_t$ be the final aggregate.

Is it true that every point in the Poisson boundary is a limit of a sequence of vertices occupied
by $A_\infty$?
(For example, on the free group (regular tree) this is not difficult to show.)
\end{ques}

\end{document}